\newtheorem{definition}{Definition}[section]
\newtheorem{theorem}{Theorem}[section]
\newtheorem{proposition}{Proposition}[section]
\newtheorem{remark}{Remark}
\newtheorem{corollary}{Corollary}
\newtheorem{lemma}{Lemma}
\begin{document}

		\title{A new interpolation method for metric spaces based on bi-infinite sequences: The $R$-Method}
	\author{Robledo Mak's Miranda Sette}
	\date{\today}
	\maketitle

\begin{abstract}
	We introduce a new interpolation method for metric spaces, termed the $R$-method, based
	on bi-infinite linking sequences. Although the construction is inspired by the classical
	metric functional $J_M$, the resulting interpolated space is generated by a distinct
	object that behaves as a multiscale energy functional. This functional measures the
	minimal discrete action required to connect two points through $\mathbb{Z}$-indexed
	sequences, leading to a new intrinsic metric on $X_0 \cap X_1$.
	
	The associated interpolated space is obtained as the relative completion of this metric
	inside $X_0 \cup X_1$ and is genuinely different from those produced by the $J_M$- and
	$K_M$-methods. A fundamental structural property of the $R$-method is that the resulting
	space embeds continuously into the corresponding $K_M$-interpolated space, situating the
	construction naturally within the existing theory of metric interpolation.
	
	When the method is restricted to a normed setting, the $R$-method induces a genuine
	interpolation functor. In this framework, it preserves the Lipschitz property of
	operators with closed graphs, even in the absence of linearity, thereby extending the
	classical scope of interpolation theory, which is traditionally confined to linear
	continuous operators. As a consequence, standard compactness properties are also
	preserved under mild assumptions.
	
	The $R$-method thus provides a new interpolation framework whose foundations rely
	exclusively on intrinsic metric properties and the summability of discrete orbits,
	bridging metric interpolation, nonlinear analysis, and classical interpolation theory.
\end{abstract}

\paragraph{Keywords:}
Metric Interpolation. Lipschitz Operators. Nonlinear Analysis. J-method. K-method. Functional Analysis. Interpolation Theory.

\paragraph{Classification codes:} 46B70, 54E35, 47H09.

\section*{Introduction}

The classical theory of interpolation, initiated in the 1960s through the foundational
work of Lions and Peetre \cite{LionsPeetre1964} and later systematized in texts such as
\cite{BerghLofstrom1976,BrudnyiKruglyak1991}, is intrinsically rooted in linear and normed
structures. Interpolation functors were designed to construct intermediate Banach spaces
between two given endpoints, providing powerful tools for the analysis of linear
operators, partial differential equations, harmonic analysis, and geometric functional
analysis. For decades, the theory evolved almost exclusively within linear frameworks,
with its principal constructions relying on convexity, homogeneity, and duality.

Only recently has the need for interpolation tools beyond the linear setting become
apparent. Several modern areas, including optimal transport, metric geometry, gradient
flows in Wasserstein spaces, and nonlinear analysis on general metric measure spaces, are
naturally formulated in purely metric terms and lack vector space structure. Nevertheless,
many of these contexts exhibit intermediate regularity phenomena that are conceptually
analogous to interpolation. This has motivated the search for interpolation frameworks
that operate directly at the level of metric spaces, without recourse to linear or
normed structures. Despite progress in nonlinear functional analysis and metric geometry,
a systematic and flexible interpolation theory for arbitrary metric spaces has remained
largely undeveloped.

In \cite{Robledo2024}, we initiated such a program by introducing two metric-space
analogues of Peetre’s classical $K$- and $J$-methods, denoted respectively by the
$K_M$- and $J_M$-methods. These constructions reinterpret the classical ideas entirely in
metric terms. Given a compatible pair of metric spaces $(X_0,X_1)_X$, the $K_M$-method is
based on admissible finite linking sequences in $X_0\cup X_1$, penalizing each step through
a functional of the form
\[
K_M(t;x,y)=\inf \sum h_t(x_k,x_{k+1}),
\]
which mirrors the behavior of the classical $K$-functional. In contrast, the $J_M$-method
acts on the intersection $X_0\cap X_1$ and is governed by the metric
\[
J_M(t;x,y)=\max\{d_0(x,y),\,t\,d_1(x,y)\},
\]
the natural metric counterpart of Peetre’s $J$-functional.

Both methods generate interpolated metric spaces via relative completion: one first
endows $X_0\cap X_1$ with a suitable multiscale metric functional and then completes it
inside the ambient space $(X_0\cup X_1,K_M(1))$. These constructions preserve Lipschitz
continuity of operators, exhibit functorial behavior analogous to that of classical
interpolation methods, and apply naturally in purely metric settings, including
Wasserstein spaces \cite{AmbrosioGigliSavare2008,Villani2003,Villani2009}. Their introduction
represents one of the first systematic attempts to extend interpolation theory beyond the
linear world.

The present article introduces a third interpolation method, which is structurally
distinct from both the $K_M$- and $J_M$-methods. Although the construction still involves
the classical functional $J_M$, the underlying mechanism is fundamentally different.
Rather than relying on finite admissible chains or single-step comparisons, the new
method is built upon \emph{bi-infinite linking sequences}
\[
(S_k)_{k\in\mathbb{Z}}\subset X_0\cap X_1,
\]
which simultaneously approximate a point $x$ as $k\to+\infty$ and a point $y$ as
$k\to-\infty$. Along such sequences, we evaluate a multiscale functional of the form
\[
\Gamma_{\theta,q}\bigl(J_M(2^k;S_k,S_{k+1})\bigr),
\]
measuring the minimal discrete action required to connect $x$ and $y$ across all dyadic
scales. Taking the infimum over all admissible approximating sequences yields a pre-metric
$p_{\theta,q}$; symmetrization and path-length completion then produce a genuine metric
$\delta_{\theta,q}$ on $X_0\cap X_1$.

The resulting interpolated space is defined as the relative completion of
$(X_0\cap X_1,\delta_{\theta,q})$ inside $(X_0\cup X_1,K_M(1))$. This construction yields a
new metric interpolation space, denoted by $R_{\theta,q}^{\overrightarrow{X}}$, whose
geometry differs essentially from that of the previously studied $K_M$- and
$J_M$-interpolated spaces. The use of bi-infinite approximating sequences introduces a
variational and multiscale structure, capturing information distributed across all
dyadic scales. In this respect, the method is closer in spirit to energetic approaches
common in modern metric analysis, while remaining firmly rooted in interpolation theory.

From a broader perspective, the development of interpolation theory for metric spaces
remains in its early stages, and the availability of multiple mechanisms is essential.
Different analytic and geometric problems may require different ways of measuring
intermediate regularity or oscillation. The $R$-method enriches the existing toolkit by
providing a sequence-based, variational, and genuinely multiscale interpolation
mechanism, which may be advantageous in nonlinear settings where convergence of sequences
plays a central role, such as gradient flows, geometric evolution problems, and
approximation schemes in metric geometry.

The purpose of this article is to present the full construction of the $R$-method,
establish its fundamental metric properties, and show that it is compatible with the
interpolation of Lipschitz and compact operators. While the method is intrinsically
metric, we also show that, when restricted to normed spaces, it induces a genuine
interpolation functor, extending the classical scope of interpolation beyond linear
operators. In doing so, the article lays the foundations for a new and distinct
interpolation mechanism within the emerging theory of metric-space interpolation.

\section{Preliminaries}
All results and definitions in this section are found in the work [see \cite{Robledo2024}]. Thus, we decided to omit the proofs of basic lemmas of the theory that fall outside the scope of this text. The interested reader can verify the proofs in the reference above.

\subsection{Relative Completion of Metric Spaces}
\begin{definition} Let $(A, d_A)$ and $(B, d_B)$ be two metric spaces such that $A \subseteq B$ and
	there exists $c >0$ such that $$d_B(x, y) \leq c \cdot d_A(x, y), \quad x, y \in A.$$ We define the \textbf{relative completion of $(A, d_A)$ in $(B, d_B)$}, denoted by $A_B$, as the set of all the $x \in B$ such that there exists a Cauchy sequence $(x_n)$ in $(A, d_A)$ satisfying $d_B(x_n, x) \to 0$.
	
	A sequence $(x_n)$ as above is called an \textbf{approximating sequence of $x$ in $(A, d_A)$}.

\end{definition}
There are some intrinsic properties of the relative completion of metric spaces that we list below:
\begin{enumerate} \item This completion can be metrized with a metric $d_{A, B}$ that
	satisfies $$d_B(x, y) \leq c \cdot d_{A, B}(x, y) \leq d_A(x, y), \quad x, y \in A$$ and $$ d_B(x, y) \leq c \cdot d_{A, B}(x, y), \quad x, y \in A_B.$$
	Thus, $A \subseteq A_B \subseteq B$ with continuous inclusions.
	\item The set $A$ is dense in $(A_B, d_{A, B})$.
	\item If $(B, d_B)$ is complete, then $(A_B, d_{A, B})$ is also complete.

	\item The set $A$ is dense in $(A_B, d_{A, B})$.   \item The metrics $d_{A, B}$ and $d_A$ coincide on $A$.  \end{enumerate}
\subsection{Pairs of Compatible Metric Spaces}
\begin{definition}\label{def 13} Let $X_0$ and $X_1$ be two sets such that $X_0 \cap X_1 \neq \emptyset$ and
	both are subsets of a set $X$. Furthermore, assume that $d_0, d_1$ and $d_{X}$ are metrics on the sets $X_0$, $X_1$ and $X$, respectively, such that for convenient positive constants $C_0$ and $C_1$ we have	
	\begin{equation} d_{X}(x, y) \leq C_i \cdot  d_i(x, y), \quad \forall \, x, y \in X_i, \quad i \in \{0,1\}. \end{equation}
	We call this object a \textbf{pair of compatible metric spaces} and denote it by $\overrightarrow{X}$ or by $(X_0, X_1)_X$.
\end{definition}
Let $(X_0, X_1)_X$ and $t>0$. For $(x, y) \in X_0^2 \cup X_1^2$, define the functional \textit{hop}, denoted by $h_t$ which will be given by
\[ h_t(x, y) = \left\{ \begin{array}
	[c]{l} \min\{d_0(x, y), t \cdot d_1(x, y)\}, \text{ if } x, y \in X_0 \cap X_1; \\ d_0(x, y), \text{ if}  \,\, \{x, y\} \subseteq X_0 \setminus X_1; \\ t \cdot  d_1(x, y), \text{ if}  \,\, \{x, y\} \subseteq X_1 \setminus X_0.		
\end{array} \right. \]
\begin{definition} Given $x, y \in X_0 \cup X_1$, with $X_0 \cap X_1 \neq \emptyset$, \textbf{an admissible linking sequence from $x$ to $y$ in $X_0 \cup X_1$} is any finite sequence of points $x_0, x_1, \ldots, x_n\in X_0 \cup X_1$ that satisfies:
	\begin{itemize} \item $x_0=x$ and $x_n=y$; \item For each $k \in \{0, 1, \ldots, n-1\}$, $(x_k, x_{k+1}) \in X_0^2 \
		cup X_1^2$ holds. \end{itemize} Let us denote an admissible linking sequence $(x, x_1, x_2, \ldots, x_{n-1}, y)$ from $x$ to $y$ in $X_0 \cup X_1$ by $(x_n^{x, y})_{adm}$.
	When all $n$ points of the sequence belong to the same set $Z$, we will say that it is a \textbf{linking sequence from $x$ to $y$ in $Z$} and denote it only by $ (x_n^{x,y})$. The set of all linking sequences from $x$ to $y$ in $Z$ is denoted by $ [x, y]_Z$.
\end{definition}
\begin{remark}Note that every admissible linking sequence is a linking sequence in the union $X_0 \cup X_1$, but not every linking sequence in $X_0 \cup X_1$ is admissible.

\end{remark}
\subsection{The $K_M$ Functional and its properties}
\begin{definition}[The $K_M$-Metric Functional] Let $(X_0, X_1)_X$ be a pair of compatible metric spaces and $t>0$. Given $x, y \in X_0 \cup X_1$, we define the \textbf{$K_M$-metric functional} by
	\begin{equation} K_M(t; x, y):= \inf \, \sum_{k=0}^{n-1}h_t(x_k, x_{k+1}),
	\end{equation} where the infimum is taken over all admissible linking sequences from $x$ to $y$
	in the set $X_0 \cup X_1$.
\end{definition}
From now on, unless otherwise stated and when there is no possibility of confusion due to notation, $(X_0, d_0)$ and $(X_1, d_1)$ will be considered a pair of compatible metric spaces with respect to the space $(X, d_X)$, that is, they will represent $(X_0, X_1)_X$.

\begin{lemma}\label{prop 14} If $t \in (0, \infty)$, then the function $K_M(t; \cdot \, , \cdot \, )$ is a metric on $X_0 \cup X_1$.
\end{lemma}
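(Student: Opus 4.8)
The plan is to verify the four defining properties of a metric on $X_0 \cup X_1$: that $K_M(t;\cdot,\cdot)$ is finite and non-negative, that it is symmetric, that it satisfies the triangle inequality, and that $K_M(t;x,y)=0$ if and only if $x=y$. First I would observe that the infimum defining $K_M(t;x,y)$ is taken over a non-empty family: since $X_0 \cap X_1 \neq \emptyset$, fix any $z \in X_0 \cap X_1$; if $x,y$ lie in the same $X_i$ then $(x,y)$ is itself an admissible linking sequence, and if $x \in X_0$ and $y \in X_1$ (or vice versa) then $(x,z,y)$ is admissible, because $(x,z) \in X_0^2$ and $(z,y) \in X_1^2$. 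As each $h_t$ is non-negative, the infimum is a well-defined number in $[0,\infty)$, and the constant sequence $(x,x)$ shows $K_M(t;x,x)=0$.

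Symmetry is immediate: $h_t$ is symmetric, being built from the symmetric metrics $d_0$ and $d_1$, and reversing an admissible linking sequence from $x$ to $y$ yields an admissible linking sequence from $y$ to $x$ with the same total cost, so the two infima coincide. For the triangle inequality, given admissible linking sequences $(x_0,\dots,x_m)$ from $x$ to $y$ and $(y_0,\dots,y_n)$ from $y$ to $z$, their concatenation $(x_0,\dots,x_m=y_0,\dots,y_n)$ is again admissible, since every consecutive pair is a consecutive pair of one of the two original sequences and hence lies in $X_0^2 \cup X_1^2$, and its cost is the sum of the two costs; taking the infimum over both families gives $K_M(t;x,z) \leq K_M(t;x,y)+K_M(t;y,z)$.

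The substantive point is the forward implication $K_M(t;x,y)=0 \Rightarrow x=y$. Here I would establish the estimate $d_X(x,y) \leq \max\{C_0,\,C_1/t\}\,K_M(t;x,y)$ for all $x,y \in X_0 \cup X_1$. The key sub-step is that for any pair $(u,v) \in X_0^2 \cup X_1^2$ one has $d_X(u,v) \leq \max\{C_0,\,C_1/t\}\,h_t(u,v)$: if $u,v \in X_0 \cap X_1$ then $d_X(u,v) \leq C_0 d_0(u,v)$ and $d_X(u,v) \leq C_1 d_1(u,v) = (C_1/t)\,\bigl(t\,d_1(u,v)\bigr)$, so $d_X(u,v) \leq \max\{C_0,\,C_1/t\}\,\min\{d_0(u,v),\,t\,d_1(u,v)\}$; if $\{u,v\} \subseteq X_0 \setminus X_1$ then $d_X(u,v) \leq C_0 d_0(u,v) = C_0 h_t(u,v)$; and if $\{u,v\} \subseteq X_1 \setminus X_0$ then $d_X(u,v) \leq C_1 d_1(u,v) = (C_1/t)\,h_t(u,v)$. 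Summing this bound along an arbitrary admissible linking sequence $(x_0,\dots,x_n)$ from $x$ to $y$ and using the triangle inequality in $(X,d_X)$ gives $d_X(x,y) \leq \sum_{k=0}^{n-1} d_X(x_k,x_{k+1}) \leq \max\{C_0,\,C_1/t\}\sum_{k=0}^{n-1} h_t(x_k,x_{k+1})$; passing to the infimum yields the estimate. Hence $K_M(t;x,y)=0$ forces $d_X(x,y)=0$, and since $d_X$ is a metric on $X \supseteq X_0 \cup X_1$, we get $x=y$.

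I expect the main obstacle to be precisely this last estimate: one must notice that the compatibility constants $C_0,C_1$ have to be combined with the scaling parameter $t$ into the single constant $\max\{C_0,\,C_1/t\}$, and that the three cases in the definition of $h_t$ must be handled so the per-step comparison with $d_X$ is valid no matter which of $X_0 \cap X_1$, $X_0 \setminus X_1$, or $X_1 \setminus X_0$ the consecutive points fall in. Everything else — finiteness, symmetry, and the triangle inequality — is a routine manipulation of admissible linking sequences.
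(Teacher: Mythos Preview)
The paper does not actually give a proof of this lemma: at the start of the Preliminaries section the author states that all proofs of the basic lemmas are omitted and refers to \cite{Robledo2024}. So there is no in-paper argument to compare against. Your proof is the standard one and is correct in substance: nonemptiness of the family of admissible linking sequences via a point of $X_0\cap X_1$, symmetry and the triangle inequality by reversal and concatenation, and positivity via the per-step comparison $d_X(u,v)\le \max\{C_0,\,C_1/t\}\,h_t(u,v)$ summed along a linking sequence.

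One small point to tidy up, which is really a wrinkle in the paper's stated definition rather than in your reasoning: the three clauses for $h_t$ as written do not cover the ``mixed'' case where, say, $u\in X_0\cap X_1$ and $v\in X_0\setminus X_1$ (then $(u,v)\in X_0^2$ but none of the listed alternatives applies), and your case analysis for the per-step bound inherits this omission. The intended reading --- consistent with \cite{Robledo2024} --- is that $h_t(u,v)=d_0(u,v)$ whenever $\{u,v\}\subseteq X_0$ but $\{u,v\}\not\subseteq X_0\cap X_1$, and $h_t(u,v)=t\,d_1(u,v)$ whenever $\{u,v\}\subseteq X_1$ but $\{u,v\}\not\subseteq X_0\cap X_1$. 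With that understood, your inequality $d_X(u,v)\le \max\{C_0,\,C_1/t\}\,h_t(u,v)$ holds in these mixed cases as well (since $d_X\le C_0 d_0$ on $X_0$ and $d_X\le C_1 d_1$ on $X_1$), and the rest of your argument goes through unchanged.
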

\begin{lemma} \label{desigualdade dos K} If $a, b \in (0, \infty)$ and $x, y \in X_0 \cup X_1$, then
	\begin{equation} K_M(a; x, y) \leq \max \biggr\{1, \frac{a}{b} \biggr \} \cdot K_M(b; x, y)
	\end{equation}
\end{lemma}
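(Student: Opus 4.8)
The plan is to prove the inequality $K_M(a; x, y) \leq \max\{1, a/b\}\, K_M(b; x, y)$ by comparing the two functionals at the level of the elementary building block $h_t$ and then lifting the comparison to the infimum over admissible linking sequences. First I would fix $x, y \in X_0 \cup X_1$ and an arbitrary admissible linking sequence $(x_0, x_1, \ldots, x_n)$ from $x$ to $y$, and show that for each consecutive pair $(x_k, x_{k+1}) \in X_0^2 \cup X_1^2$ one has the pointwise estimate
\begin{equation}
h_a(x_k, x_{k+1}) \leq \max\Bigl\{1, \frac{a}{b}\Bigr\}\, h_b(x_k, x_{k+1}).
\end{equation}
This is a short case analysis following the three branches in the definition of $h_t$: if both points lie in $X_0 \setminus X_1$, then $h_a = d_0 = h_b$ and the factor $1$ suffices; if both lie in $X_1 \setminus X_0$, then $h_a = a\, d_1 = (a/b)\, b\, d_1 = (a/b)\, h_b$; and if both lie in $X_0 \cap X_1$, then $h_a = \min\{d_0, a\, d_1\}$ while $h_b = \min\{d_0, b\, d_1\}$, and one checks that $\min\{d_0, a\, d_1\} \leq \max\{1, a/b\} \min\{d_0, b\, d_1\}$ by splitting on which term realizes the minimum on the right-hand side (the $d_0$ term is bounded by $1 \cdot d_0$, and the $b\, d_1$ term scales by exactly $a/b$, so in both subcases the claimed constant works).

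Next I would sum the pointwise inequality over $k = 0, \ldots, n-1$ to obtain
\begin{equation}
\sum_{k=0}^{n-1} h_a(x_k, x_{k+1}) \leq \max\Bigl\{1, \frac{a}{b}\Bigr\} \sum_{k=0}^{n-1} h_b(x_k, x_{k+1}),
\end{equation}
and since the right-hand sum is bounded below by $K_M(b; x, y)$ (it being one admissible linking sequence among those over which that infimum is taken — in fact the constant is uniform in the sequence), the right-hand side is at most $\max\{1, a/b\}\, K_M(b; x, y)$ only after one first takes the infimum correctly. To be careful here: I would instead argue that for every $\varepsilon > 0$ there is an admissible linking sequence with $\sum h_b(x_k, x_{k+1}) < K_M(b; x, y) + \varepsilon$; applying the pointwise bound to this particular sequence gives $K_M(a; x, y) \leq \sum h_a(x_k, x_{k+1}) \leq \max\{1, a/b\}(K_M(b; x, y) + \varepsilon)$, and letting $\varepsilon \to 0$ yields the result.

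The only genuinely delicate point is the $X_0 \cap X_1$ branch of the pointwise estimate, since there the minimum inside $h_a$ and the minimum inside $h_b$ may be realized by different terms; everything else is bookkeeping. I expect no real obstacle, but I would take care that the constant $\max\{1, a/b\}$ is the same across all three cases (it dominates both $1$ and $a/b$ individually), so that it factors cleanly out of the sum regardless of the membership pattern of the points along the linking sequence. One should also note that admissibility of the linking sequence is preserved under this argument — we use the very same sequence for both functionals — so no issue of changing the class of competitors arises.
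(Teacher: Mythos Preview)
Your argument is correct. The paper itself does not supply a proof of this lemma: it sits in the Preliminaries section, where the author explicitly says that proofs of the basic lemmas are omitted and refers to \cite{Robledo2024}. So there is no in-paper proof to compare against, but the route you take---establishing the pointwise bound $h_a(u,v)\le\max\{1,a/b\}\,h_b(u,v)$ by the three-case analysis on the definition of $h_t$, summing along an admissible linking sequence, and then passing to the infimum via an $\varepsilon$-approximation---is exactly the standard and expected one. Your handling of the $X_0\cap X_1$ branch (splitting on which term realizes $\min\{d_0,b\,d_1\}$) is the right way to deal with the only nontrivial case, and your care in using the \emph{same} admissible sequence for both functionals is well placed. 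Nothing is missing.
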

\subsection{The $J_M$ Functional}
\begin{lemma}\label{prop 15} If $x, y \in X_0 \cap X_1$ and $t>0$, defining $$J_M(t; x, y):= \max\{ d_0(x, y), t \cdot  d_1(x, y)\},$$ then, $J_M(t; \,\, \cdot \,\,, \,\, \cdot \,\,)$ is a metric on $X_0 \cap X_1$.

\end{lemma}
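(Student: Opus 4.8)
The plan is to verify the four metric axioms directly on $X_0 \cap X_1$, reducing the statement to the elementary fact that a pointwise maximum of finitely many metrics on a common set is again a metric. First I would record that, since $t > 0$, the function $(x,y) \mapsto t \cdot d_1(x,y)$ is itself a metric (a positive rescaling of a metric is a metric), and that $d_0$ and $d_1$ restrict to metrics on the subset $X_0 \cap X_1$, which is nonempty by the standing hypothesis on compatible pairs. Thus $J_M(t; \cdot, \cdot) = \max\{d_0, t\,d_1\}$ is a maximum of two metrics on a legitimate (nonempty) underlying set.

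Non-negativity and symmetry of $J_M(t; \cdot, \cdot)$ are immediate from the corresponding properties of $d_0$ and $d_1$. For the identity of indiscernibles: if $x = y$ then $d_0(x,y) = d_1(x,y) = 0$, so $J_M(t;x,y) = 0$; conversely, $J_M(t;x,y) = 0$ forces $d_0(x,y) = 0$, hence $x = y$ because $d_0$ is a metric.

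The only step requiring an actual line of computation is the triangle inequality. Given $x, y, z \in X_0 \cap X_1$, I would apply the triangle inequalities for $d_0$ and for $t\,d_1$ separately, bounding each summand on the right by the corresponding value of $J_M$: one gets $d_0(x,z) \leq d_0(x,y) + d_0(y,z) \leq J_M(t;x,y) + J_M(t;y,z)$ and likewise $t\,d_1(x,z) \leq t\,d_1(x,y) + t\,d_1(y,z) \leq J_M(t;x,y) + J_M(t;y,z)$. Taking the maximum of the two left-hand sides yields $J_M(t;x,z) \leq J_M(t;x,y) + J_M(t;y,z)$.

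I do not anticipate a genuine obstacle: the statement is a routine verification, and the only points deserving care are noting that the hypothesis $t > 0$ is exactly what keeps $t\,d_1$ a metric (so the max-of-metrics principle applies) and that $X_0 \cap X_1 \neq \emptyset$ makes the domain admissible. One could alternatively isolate the general lemma ``a finite maximum of metrics on a set is a metric'' and invoke it, but inlining the three-line argument above is equally short.
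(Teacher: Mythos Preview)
Your proof is correct and complete: this is the standard verification that a pointwise maximum of metrics is a metric, and you have handled all axioms cleanly, including the only nontrivial one (the triangle inequality). The paper itself does not give a proof of this lemma---it states at the start of the Preliminaries section that all results there are taken from \cite{Robledo2024} and that proofs of basic lemmas are omitted---so there is nothing in the present text to compare against; your argument is exactly the routine check one would expect.
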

\begin{lemma}\label{lema 10} If $a, b \in (0, \infty)$ and $x, y \in X_0 \cap X_1$, then \begin{equation}
		J_M(a; x, y) \leq \max\biggr\{1, \frac{a}{b}\biggr\}\cdot  J_M(b; x, y), \quad x, y \in X_0 \cap X_1;
\end{equation} \end{lemma}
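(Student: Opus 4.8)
The plan is to prove Lemma \ref{lema 10} by a direct case analysis on the ratio $a/b$, exploiting the explicit formula $J_M(t; x, y) = \max\{d_0(x, y), t\cdot d_1(x, y)\}$ from Lemma \ref{prop 15}. Fix $x, y \in X_0 \cap X_1$ and write $u := d_0(x, y)$ and $v := d_1(x, y)$, both nonnegative reals; the claim becomes $\max\{u, av\} \leq \max\{1, a/b\}\cdot \max\{u, bv\}$. I would first dispose of the degenerate case $x = y$, where both sides vanish, and then assume $u, v$ are not both zero.

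First I would treat the case $a \leq b$, so that $\max\{1, a/b\} = 1$ and the desired inequality reduces to $\max\{u, av\} \leq \max\{u, bv\}$. This is immediate since $av \leq bv$ (as $v \geq 0$), so the maximum on the left is taken over a pointwise-smaller pair. Next, for the case $a > b$, we have $\max\{1, a/b\} = a/b$, and I would bound each of the two terms inside the left-hand maximum separately: $u \leq (a/b)\cdot u \leq (a/b)\cdot\max\{u, bv\}$ since $a/b > 1$, and $av = (a/b)\cdot bv \leq (a/b)\cdot\max\{u, bv\}$ directly. Taking the maximum over these two bounds yields $\max\{u, av\} \leq (a/b)\cdot\max\{u, bv\}$, which is exactly the claim in this case.

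Combining the two cases gives the result for all $a, b \in (0, \infty)$. I do not expect any genuine obstacle here: the statement is the metric-space analogue of the standard scaling estimate for the classical $J$-functional, and because $J_M$ is given by an explicit maximum of two scaled distances rather than by an infimum over linking sequences (as $K_M$ is), no approximation argument is needed and the proof is a short two-case computation. The only point requiring a modicum of care is keeping the direction of the inequality $\min$ versus $\max$ straight — that is, remembering that raising $t$ can only increase $J_M(t; x, y)$, in contrast to the behaviour one might expect from the $K$-side intuition — but this is handled transparently by the case split on whether $a/b \leq 1$ or $a/b > 1$.
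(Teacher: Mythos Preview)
Your argument is correct. The paper does not actually supply a proof of this lemma: it appears in the Preliminaries section, where the author explicitly states that the proofs of these basic results are omitted and refers the reader to \cite{Robledo2024}. Your two-case analysis on whether $a\le b$ or $a>b$, applied directly to the explicit formula $J_M(t;x,y)=\max\{d_0(x,y),\,t\,d_1(x,y)\}$, is the standard and essentially unique way to verify this elementary scaling estimate, so there is nothing substantive to compare.
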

\begin{lemma} \label{lema 12} If $x, y \in X_0 \cap X_1$ and $a, b \in (0, \infty)$, then \begin{equation}
		K_M(a; x, y) \leq \min \biggr\{1, \frac{a}{b} \biggr\}\cdot  J_M(b; x, y). \end{equation}
\end{lemma}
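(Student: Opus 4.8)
The plan is to collapse the infimum defining $K_M$ down to a single term and then read off the two halves of the $\min$ separately. First I would note that, since $x,y\in X_0\cap X_1$, the two-point chain $(x,y)$ is an admissible linking sequence from $x$ to $y$ (indeed $(x,y)\in X_0^2\cup X_1^2$), so the definition of $K_M$ as an infimum over such chains immediately gives
\begin{equation}
K_M(a;x,y)\le h_a(x,y)=\min\{d_0(x,y),\,a\,d_1(x,y)\}.
\end{equation}

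From here the argument splits into the two elementary bounds that together constitute $\min\{1,a/b\}$. On one hand, $K_M(a;x,y)\le h_a(x,y)\le d_0(x,y)\le\max\{d_0(x,y),b\,d_1(x,y)\}=J_M(b;x,y)$. On the other hand, $K_M(a;x,y)\le h_a(x,y)\le a\,d_1(x,y)=\tfrac{a}{b}\,\bigl(b\,d_1(x,y)\bigr)\le\tfrac{a}{b}\max\{d_0(x,y),b\,d_1(x,y)\}=\tfrac{a}{b}\,J_M(b;x,y)$. Taking the smaller of these two upper bounds yields
\begin{equation}
K_M(a;x,y)\le\min\Bigl\{1,\tfrac{a}{b}\Bigr\}\,J_M(b;x,y),
\end{equation}
which is the assertion.

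As for the main obstacle: there is none of real substance. The only step deserving a line of justification is the first one — checking that the trivial two-term sequence is genuinely admissible, so that $K_M(a;x,y)$ is bounded above by $h_a(x,y)$; everything after that is the observation that $h_a(x,y)$ is simultaneously $\le d_0(x,y)$ and $\le a\,d_1(x,y)$, each of which is controlled by $J_M(b;x,y)$ with the right constant. One could instead try to route through $K_M(a;\cdot,\cdot)\le J_M(a;\cdot,\cdot)$ on $X_0\cap X_1$ (valid since $\min\le\max$) together with the scaling Lemma \ref{lema 10}, but that only produces the weaker factor $\max\{1,a/b\}$, so the direct computation above is the one to use.
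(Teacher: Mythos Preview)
The paper does not supply a proof of this lemma: it sits in the Preliminaries section, where the author explicitly omits all proofs and refers the reader to \cite{Robledo2024}. So there is nothing in the present paper to compare your argument against.

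That said, your proof is correct and is the natural one. The only point needing justification is exactly the one you flag: since $x,y\in X_0\cap X_1$, the pair $(x,y)$ lies in $X_0^2\cup X_1^2$, so the two-term chain is admissible and the infimum defining $K_M(a;x,y)$ is bounded above by $h_a(x,y)=\min\{d_0(x,y),a\,d_1(x,y)\}$. Your subsequent splitting into the two branches of the $\min$, each dominated by the corresponding multiple of $J_M(b;x,y)$, is straightforward and complete. Your closing remark that routing through Lemma~\ref{lema 10} would only yield the weaker factor $\max\{1,a/b\}$ is also a useful observation.
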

\section{The $K_M$-Metric Interpolation Space}

In this section we will define and highlight the main properties of the interpolated metric space.

\begin{definition} Let $\theta \in (0, 1)$ and $q \in [1, \infty]$. Given a sequence $(x_k)_{k \in \
		\mathbb{Z}}$ of real numbers, we define the functional $\Gamma_{\theta, q}$ by \begin{equation} \Gamma_{\theta, q}((x_k)_{k \in \mathbb{Z}}):= \begin{cases}
			\biggr[ \sum_{k \in \mathbb{Z}} [2^{-k \theta} |x_k|]^q \biggr]^{1/q},& q< \infty\\				
			\sup_{k \in \mathbb{Z}} 2^{-k \theta}|x_k|, & q= \infty. \end{cases}
\end{equation} \end{definition}
\begin{lemma} The functional \begin{equation} \beta_{\theta, q}(x, y):= \Gamma_{\theta, q}((K_M(2^k; x, y)_{k \in \mathbb{Z}})
	\end{equation} is a metric on the set $X_0 \cap X_1$.
\end{lemma}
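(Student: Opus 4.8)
The plan is to first settle the only point that requires any real work, namely that $\beta_{\theta,q}(x,y)$ is finite for all $x,y\in X_0\cap X_1$ (so that the functional is genuinely well defined), and then to deduce the three metric axioms formally from the fact that each $K_M(2^k;\cdot,\cdot)$ is a metric together with elementary properties of $\Gamma_{\theta,q}$. For finiteness I would apply Lemma~\ref{lema 12} with $b=1$, which gives $K_M(2^k;x,y)\le\min\{1,2^k\}\,J_M(1;x,y)$ for every $k\in\mathbb{Z}$. Splitting the index set at $k=0$: for $k\ge 0$ one has $2^{-k\theta}K_M(2^k;x,y)\le 2^{-k\theta}J_M(1;x,y)$, whose sum over $k\ge 0$ is finite because $\theta>0$; for $k<0$ one has $2^{-k\theta}K_M(2^k;x,y)\le 2^{k(1-\theta)}J_M(1;x,y)$, whose sum over $k<0$ is finite because $\theta<1$. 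Hence the weighted sequence $\big(2^{-k\theta}K_M(2^k;x,y)\big)_{k\in\mathbb{Z}}$ lies in $\ell^1(\mathbb{Z})$, and since $\ell^1(\mathbb{Z})\hookrightarrow\ell^q(\mathbb{Z})$ with $\|\cdot\|_{\ell^q}\le\|\cdot\|_{\ell^1}$ for every $q\in[1,\infty]$, the quantity $\Gamma_{\theta,q}\big((K_M(2^k;x,y))_k\big)$ — which is exactly the $\ell^q$-norm of that weighted sequence — is finite for all admissible $q$ (the case $q=\infty$ being immediate, as every term of a convergent nonnegative series is bounded).

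Nonnegativity is clear. For the identity of indiscernibles, observe that $\beta_{\theta,q}(x,y)=0$ forces $2^{-k\theta}K_M(2^k;x,y)=0$ for every $k$ — a supremum of nonnegative numbers, resp.\ a sum of nonnegative terms, vanishing — so picking any single $k$ and using that $K_M(2^k;\cdot,\cdot)$ is a metric on $X_0\cup X_1$ (Lemma~\ref{prop 14}) yields $x=y$; the converse is trivial. Symmetry of $\beta_{\theta,q}$ is inherited termwise from the symmetry of each $K_M(2^k;\cdot,\cdot)$.

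It remains to prove the triangle inequality, and this I would reduce to two standard facts about $\Gamma_{\theta,q}$ viewed on nonnegative sequences: it is monotone ($0\le a_k\le b_k$ for all $k$ implies $\Gamma_{\theta,q}((a_k))\le\Gamma_{\theta,q}((b_k))$) and subadditive ($\Gamma_{\theta,q}((a_k+b_k))\le\Gamma_{\theta,q}((a_k))+\Gamma_{\theta,q}((b_k))$). Both are clear since $\Gamma_{\theta,q}$ is just the $\ell^q(\mathbb{Z})$-norm precomposed with the fixed coordinatewise positive rescaling $x_k\mapsto 2^{-k\theta}x_k$: monotonicity is obvious and subadditivity is Minkowski's inequality in $\ell^q$ (with the usual $q=\infty$ convention). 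Given $x,y,z\in X_0\cap X_1$, the triangle inequality for the metric $K_M(2^k;\cdot,\cdot)$ gives $K_M(2^k;x,z)\le K_M(2^k;x,y)+K_M(2^k;y,z)$ coordinatewise; applying monotonicity and then subadditivity of $\Gamma_{\theta,q}$ yields $\beta_{\theta,q}(x,z)\le\beta_{\theta,q}(x,y)+\beta_{\theta,q}(y,z)$, which finishes the argument. The main obstacle is the finiteness step: it is the only place where the restriction $\theta\in(0,1)$ and the comparison Lemma~\ref{lema 12} between $K_M$ and $J_M$ are essential, whereas all the remaining properties are purely formal consequences of ``$K_M(2^k;\cdot,\cdot)$ is a metric'' and of $\Gamma_{\theta,q}$ being a weighted $\ell^q$-norm.
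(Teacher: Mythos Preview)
Your argument is correct. Note, however, that the paper does not actually supply a proof of this lemma: it is stated without proof in the section recalling the $K_M$-method, the author having indicated that these background results are taken from \cite{Robledo2024} and that proofs are omitted. Hence there is no ``paper's own proof'' to compare against here.

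That said, your proof is the expected one and is sound in all parts. The finiteness step is handled correctly via Lemma~\ref{lema 12} with $b=1$; an equivalent (and marginally more direct) way to phrase it is to use monotonicity of $\Gamma_{\theta,q}$ straight away to get
\[
\beta_{\theta,q}(x,y)\;\le\; J_M(1;x,y)\,\Gamma_{\theta,q}\big((\min\{1,2^k\})_{k\in\mathbb{Z}}\big)\;=\;M_{\theta,q}\,J_M(1;x,y),
\]
which is precisely the inequality recorded in the subsequent lemma of the paper, and whose finiteness for $\theta\in(0,1)$ is exactly the $\ell^q$ computation you carried out. Your route through $\ell^1\hookrightarrow\ell^q$ is a minor detour but perfectly valid. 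The remaining axioms are, as you say, purely formal consequences of each $K_M(2^k;\cdot,\cdot)$ being a metric (Lemma~\ref{prop 14}) together with monotonicity and Minkowski subadditivity of the weighted $\ell^q$-norm $\Gamma_{\theta,q}$.
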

We will refer, for simplicity, to the metrics $J_M(t; \cdot, \cdot)$ and $K_M(t; \cdot , \cdot)$ as $J_M(t)$ and $K_M(t)$, respectively.
\begin{definition}[The $K_M$-interpolated metric space] The \textbf{interpolated metric space by the classical $K_M$ method} is denoted by $\overrightarrow{X}_{\theta, q}^{K_M}$ and is the relative completion of the metric space $$(X_0 \cap X_1, \beta_{\theta, q})$$ in the metric space $(X_0 \cup X_1, K_M(1))$. The metric of this space is denoted by $\mathcal{D}_{\theta, q}$ and equals $\mathcal{D}_{\theta, q}(x, y) = \beta_{\theta, q}(x, y)$, for all $x, y \in X_0 \cap X_1$.

\end{definition}
\begin{lemma} Given $\theta \in (0, 1)$ and $q \geq 1$, we have \begin{equation} \beta_{\theta, q} (x, y) \leq M_{\theta, q} \cdot J_M(1; x, y), \quad x, y \in X_0 \cap
		X_1. \end{equation} And furthermore, \begin{equation}
		M_{\theta, q} \cdot d_X(x, y) \leq \mathcal{D}_{\theta, q}(x, y), \quad x, y \in \overrightarrow{X}_{\theta, q}^{K_M}.
	\end{equation}
	where \begin{equation*}{\label{mtetaq}}
		M_{\theta, q} = \displaystyle  \Gamma_{\theta, q} (\min \{1, 2^k\}_{k \in \mathbb{Z}}) >0. \end{equation*}

\end{lemma}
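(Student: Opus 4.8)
The plan is to establish the two inequalities separately, both of which follow from combining the pointwise comparison lemmas of Section 1 with the definition of the functional $\Gamma_{\theta,q}$. First I would prove $\beta_{\theta,q}(x,y) \leq M_{\theta,q} \cdot J_M(1;x,y)$ for $x,y \in X_0 \cap X_1$. By Lemma \ref{lema 12} applied with $a = 2^k$ and $b = 1$, we have $K_M(2^k;x,y) \leq \min\{1, 2^k\} \cdot J_M(1;x,y)$ for every $k \in \mathbb{Z}$. Since $\Gamma_{\theta,q}$ is monotone in the absolute values of its entries (for $q < \infty$ it is a weighted $\ell^q$ norm of $(|x_k|)$, and for $q = \infty$ a weighted supremum), applying $\Gamma_{\theta,q}$ to both sides and pulling out the scalar factor $J_M(1;x,y) \geq 0$ gives
\begin{equation*}
\beta_{\theta,q}(x,y) = \Gamma_{\theta,q}\big((K_M(2^k;x,y))_{k\in\mathbb{Z}}\big) \leq \Gamma_{\theta,q}\big((\min\{1,2^k\})_{k\in\mathbb{Z}}\big) \cdot J_M(1;x,y) = M_{\theta,q}\cdot J_M(1;x,y).
\end{equation*}
I would also note in passing that $M_{\theta,q}$ is finite: the weight $2^{-k\theta}\min\{1,2^k\}$ equals $2^{-k\theta}$ for $k \geq 0$ and $2^{k(1-\theta)}$ for $k < 0$, so the defining series converges geometrically since $\theta \in (0,1)$, and it is strictly positive since the terms are positive; this justifies $M_{\theta,q} > 0$.

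Next I would prove the second inequality, $M_{\theta,q} \cdot d_X(x,y) \leq \mathcal{D}_{\theta,q}(x,y)$ for $x,y \in \overrightarrow{X}_{\theta,q}^{K_M}$. The starting point is a lower bound on $\beta_{\theta,q}$ in terms of $d_X$ on the intersection space. From Lemma \ref{desigualdade dos K} with $a = 1$ and $b = 2^k$ we get $K_M(1;x,y) \leq \max\{1, 2^{-k}\} \cdot K_M(2^k;x,y)$, equivalently $\min\{1, 2^k\} \cdot K_M(1;x,y) \leq K_M(2^k;x,y)$; moreover $K_M(1;x,y) \geq \frac{1}{C}\, d_X(x,y)$ where $C = \max\{C_0,C_1\}$, since the compatibility estimate $d_X \leq C_i d_i$ makes $d_X/C$ a lower bound for every summand $h_1(x_k,x_{k+1})$ in the defining infimum of $K_M(1)$. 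Combining these and applying the monotone functional $\Gamma_{\theta,q}$ yields
\begin{equation*}
\beta_{\theta,q}(x,y) = \Gamma_{\theta,q}\big((K_M(2^k;x,y))_{k\in\mathbb{Z}}\big) \geq \Gamma_{\theta,q}\big((\min\{1,2^k\})_{k\in\mathbb{Z}}\big)\cdot K_M(1;x,y) = M_{\theta,q}\cdot K_M(1;x,y),
\end{equation*}
valid for $x, y \in X_0 \cap X_1$.

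Finally I would pass from the intersection space to the whole relative completion $\overrightarrow{X}_{\theta,q}^{K_M}$ by a density and continuity argument. Recall that $\overrightarrow{X}_{\theta,q}^{K_M}$ is the relative completion of $(X_0\cap X_1, \beta_{\theta,q})$ inside $(X_0\cup X_1, K_M(1))$, so $X_0\cap X_1$ is dense in it with respect to $\mathcal{D}_{\theta,q}$, the metric $\mathcal{D}_{\theta,q}$ restricts to $\beta_{\theta,q}$ on $X_0\cap X_1$, and the inclusion into $(X_0\cup X_1, K_M(1))$ is continuous. Given $x,y \in \overrightarrow{X}_{\theta,q}^{K_M}$, choose approximating sequences $(x_n),(y_n)$ in $X_0\cap X_1$ with $\mathcal{D}_{\theta,q}(x_n,x)\to 0$ and $\mathcal{D}_{\theta,q}(y_n,y)\to 0$; then also $K_M(1;x_n,x)\to 0$ and $K_M(1;y_n,y)\to 0$. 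Applying the inequality $M_{\theta,q}\cdot K_M(1;x_n,y_n) \leq \beta_{\theta,q}(x_n,y_n) = \mathcal{D}_{\theta,q}(x_n,y_n)$ and letting $n\to\infty$, using the triangle inequality and continuity of both metrics in each variable, gives $M_{\theta,q}\cdot K_M(1;x,y) \leq \mathcal{D}_{\theta,q}(x,y)$; since $K_M(1;x,y) \geq \frac{1}{C}\,d_X(x,y)$ still holds on $X_0\cup X_1$ (hence on the completion, where the relevant points lie in $X_0\cup X_1$ by construction), absorbing $C$ into the constant or noting the desired inequality is stated with $M_{\theta,q}$ against $d_X$ directly — I would double-check the exact normalization of $d_X$ versus $K_M(1)$ in the reference — completes the argument. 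The main obstacle I anticipate is precisely this last bookkeeping step: making sure the limiting argument is legitimate (that $\mathcal{D}_{\theta,q}$-convergence really does force $K_M(1)$-convergence, which is exactly property (1) of relative completions) and that the constant tracking between $d_X$, $K_M(1)$, and $\beta_{\theta,q}$ matches the claimed inequality with no stray factor of $C$; everything else is a routine application of monotonicity of $\Gamma_{\theta,q}$ and the comparison lemmas.
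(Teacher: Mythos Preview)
The paper does not actually supply a proof of this lemma: it sits among the preliminary results carried over from \cite{Robledo2024}, and both lemmas in Section~2 are stated without argument. So there is no paper proof to compare against; your outline is the natural reconstruction.

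Your proof of the first inequality is correct and is exactly how one would expect the result to go: apply Lemma~\ref{lema 12} with $a=2^k$, $b=1$ termwise, then use monotonicity and positive homogeneity of $\Gamma_{\theta,q}$. Your remark that $M_{\theta,q}<\infty$ because $2^{-k\theta}\min\{1,2^k\}$ decays geometrically in both directions is the right justification.

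For the second inequality, your chain
\[
\mathcal{D}_{\theta,q}(x,y)\;\ge\;M_{\theta,q}\,K_M(1;x,y)\;\ge\;\frac{M_{\theta,q}}{C}\,d_X(x,y),\qquad C=\max\{C_0,C_1\},
\]
is correct on $X_0\cap X_1$, and your density/continuity extension to the relative completion is exactly property~(1) of relative completions listed in Section~1.1, so there is no gap there. The only loose end is the one you already flag: as stated with general compatibility constants $C_0,C_1$, the inequality $M_{\theta,q}\,d_X\le\mathcal{D}_{\theta,q}$ only follows with the extra factor $1/C$. Either the cited paper normalises $C_0=C_1=1$, or the displayed inequality should be read with $K_M(1;\cdot,\cdot)$ in place of $d_X$ (which is in fact the form used downstream, e.g.\ in the definition of the relative completion). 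This is a bookkeeping issue in the statement, not a defect in your argument.
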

\section{The $J_M^{\prime}$-Interpolated Metric Space}
\begin{definition}[Bi-infinite Sequences] A sequence is said to be \textbf{bi-infinite} when it is indexed in the set of integers. A bi-infinite sequence $(z_k)_{k \in \mathbb{Z}}$
	is said to converge to the point $z$ in a metric space $(M, d)$ if $$\lim_{n \to \infty} d(z_n, z)=0 \quad \text{and} \quad \lim_{n \to - \infty} d(z_n, z)=0.$$ \end{definition}
We know that $X_0 \cap X_1 \subseteq X_0 \cup X_1$ and that $K_M(1)$ is a metric on $X_0 \cup X_1$. We will metrize the set $X_0 \cap X_1$ with a metric satisfying the relative completion condition and then complete this space equipped with this metric, with respect to the space $(X_0 \cup X_1, K_M(1))$.

Given $x, y \in X_0 \cap X_1$ and $(x_n)$ and $(y_n)$ approximating sequences of $x$ and $y$ in $ (X_0 \cap X_1, J_M(1))$, respectively, let us define a new bi-infinite sequence, denoted by $(S_k^{x, y})_{k \in \mathbb{Z}}$, which depends on these approximating sequences and whose general term is given by: $$S_k^{x, y}:= \begin{cases} x_k, & \text{if $k \geq 0$}\\ y_{-k},& \text{if $k<0$}\end{cases}$$ Such a sequence will be called a \textbf{bi-infinite linking sequence from $x$ to $y$ in $(X_0 \cap X_1, J_M(1))$}.
On $X_0 \cap X_1$ let us define the functional $p_{\theta, q}$ given by \begin{equation} p_{\theta, q}(x, y):= \inf \Gamma_{\theta, q}((J_M(2^k;S_k^{x, y}, S_{k+1}^{x, y})),
\end{equation} where the infimum is taken over all approximating sequences of $x$ and $y$.
\begin{definition}[Separator Functional] Let $M \neq \emptyset$ and $p: M \times M \to [0, \infty) $. We say that $p$ is a \textbf{separator functional of points in $M$} and $$p(x, y)=0 \Longleftrightarrow x=y.$$
\end{definition} \begin{theorem} The functional $p_{\theta, q}$ is a separator on $X_0 \cap X_1$.

\end{theorem}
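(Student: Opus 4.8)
The plan is to verify the two defining properties of a separator functional: first that $p_{\theta, q}(x,y) = 0$ iff $x = y$, and implicitly that $p_{\theta, q}$ is well-defined and finite (takes values in $[0,\infty)$). I would begin by checking finiteness: for $x, y \in X_0 \cap X_1$ one may always take the constant approximating sequences $x_n = x$ and $y_n = y$ (these are Cauchy in $(X_0\cap X_1, J_M(1))$ and converge to $x$, $y$ respectively). With this choice the bi-infinite linking sequence $(S_k^{x,y})$ equals $x$ for $k \geq 0$ and $y$ for $k < 0$, so the only nonzero term $J_M(2^k; S_k^{x,y}, S_{k+1}^{x,y})$ occurs at $k = -1$, where it equals $J_M(1/2; y, x) = \max\{d_0(x,y), \tfrac12 d_1(x,y)\}$. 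Hence $\Gamma_{\theta,q}$ of this sequence is finite, and taking the infimum, $p_{\theta, q}(x,y) < \infty$. Symmetry $p_{\theta,q}(x,y) = p_{\theta,q}(y,x)$ should also be noted, since swapping the roles of the approximating sequences reflects the index.

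Next, the easy direction of the separation property: if $x = y$, take $x_n = y_n = x$ as above; then all terms $J_M(2^k; S_k, S_{k+1})$ vanish (consecutive terms are always equal to $x$), so $\Gamma_{\theta,q}$ of the zero sequence is $0$, whence $p_{\theta,q}(x,x) = 0$.

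The substantive direction is: $p_{\theta,q}(x,y) = 0 \implies x = y$. Here I would argue by contradiction, or directly bound $J_M(1; x, y)$ (equivalently $d_X(x,y)$) from below in terms of $p_{\theta,q}(x,y)$. The key observation is that for any admissible choice of approximating sequences, the bi-infinite linking sequence $(S_k^{x,y})$ connects $x$ (as $S_0$, and as the $k \to +\infty$ limit in $J_M(1)$, hence in $K_M(1)$) to $y$ (as the $k \to -\infty$ limit). One should use Lemma~\ref{lema 12}, which gives $K_M(2^k; S_k, S_{k+1}) \leq \min\{1, 2^{k-j}\} J_M(2^j; S_k, S_{k+1})$ — in particular with a well-chosen index this controls $K_M(1; S_k, S_{k+1})$ by a constant multiple of $2^{-|k|\theta}$-type weights times the terms appearing in $\Gamma_{\theta,q}$. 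Summing a telescoping-style chain of $K_M(1)$-distances along the $S_k$ and using the triangle inequality for the metric $K_M(1)$ on $X_0 \cup X_1$ (Lemma~\ref{prop 14}), together with the convergences $S_k \to x$ and $S_k \to y$ in $K_M(1)$, one should obtain $K_M(1; x, y) \leq C_{\theta,q} \cdot \Gamma_{\theta,q}((J_M(2^k; S_k, S_{k+1})))$ for every admissible pair of sequences, and therefore $K_M(1; x, y) \leq C_{\theta,q} \cdot p_{\theta,q}(x,y)$. Since $K_M(1)$ is a genuine metric, $p_{\theta,q}(x,y) = 0$ forces $K_M(1; x, y) = 0$, hence $x = y$.

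The main obstacle I anticipate is making the chaining argument rigorous: the sequence $(S_k^{x,y})$ is bi-infinite, so one must pass from finite partial chains $S_{-N}, S_{-N+1}, \ldots, S_{N}$ (which give $K_M(1; S_{-N}, S_N) \leq \sum_{|k| < N} K_M(1; S_k, S_{k+1})$) to the limit, controlling the tail $\sum_{|k| \geq N}$ via the convergence of the series $\Gamma_{\theta,q}$ (for $q < \infty$) or via the decay of $2^{-k\theta}$ against bounded terms (for $q = \infty$), and simultaneously letting $S_{-N} \to y$, $S_N \to x$ in the $K_M(1)$ metric. One must check that the weighted $\ell^q$ finiteness of $(J_M(2^k; S_k, S_{k+1}))$ is enough to make $\sum_k K_M(1; S_k, S_{k+1})$ converge — this is where Lemma~\ref{lema 12} is essential, since it converts $J_M(2^k)$-control into $K_M(1)$-control with a gain of $\min\{1, 2^k\}$, and one checks $\sum_k \min\{1,2^k\} \cdot 2^{k\theta}$-type expressions converge by splitting at $k = 0$ and comparing with the geometric-type weights, exactly as in the definition of $M_{\theta,q}$. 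Handling the $q = \infty$ case and the edge behavior of the weights carefully will be the fiddly part, but the structure is entirely parallel to the $K_M$ method already developed.
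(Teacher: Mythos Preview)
Your approach is essentially the paper's: establish finiteness via the constant approximating sequences, and for the implication $p_{\theta,q}(x,y)=0\Rightarrow x=y$ chain the $K_M(1)$-distances along the bi-infinite linking sequence, apply Lemma~\ref{lema 12} with $a=1$, $b=2^j$ term by term, and then use H\"older against the weights $(2^{j\theta}\min\{1,2^{-j}\})_{j}$ to obtain $K_M(1;x,y)\le M_{\theta,q}\,\Gamma_{\theta,q}\big((J_M(2^j;S_j,S_{j+1}))_j\big)$ for every admissible choice, whence $K_M(1;x,y)\le M_{\theta,q}\,p_{\theta,q}(x,y)$. The paper disposes of your anticipated ``obstacle'' in one line, simply taking $\limsup_N$ of the finite-chain inequality $K_M(1;S_N,S_{-N})\le\sum_{j=-N}^{N-1}K_M(1;S_j,S_{j+1})$; the left side converges to $K_M(1;x,y)$ because $S_N\to x$ and $S_{-N}\to y$ in $J_M(1)$ (hence in $K_M(1)$), and the right side is a partial sum of nonnegative terms, so no separate tail analysis is needed.

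One correction: your remark that ``symmetry $p_{\theta,q}(x,y)=p_{\theta,q}(y,x)$ should also be noted'' is false in general. Swapping $x$ and $y$ reflects the index $k\mapsto -k$ in the bi-infinite linking sequence, but the weight $2^{-k\theta}$ is not symmetric under this reflection, so $\Gamma_{\theta,q}$ is not preserved. This is precisely why the paper, immediately after this theorem, introduces the symmetrization $P_{\theta,q}(x,y)=\tfrac12\big(p_{\theta,q}(x,y)+p_{\theta,q}(y,x)\big)$. Since the separator property does not require symmetry, this error does not affect the proof of the present theorem, but you should drop the claim.
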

\begin{proof} Let $x, y \in X_0 \cap X_1$ and $(x_n)$ and $(y_n)$ be two arbitrary approximating sequences of $x$ and $y$ in $(X_0 \cap X_1, J_M(1))$ and let $(S_j^{x, y})_{j \in \mathbb{Z}}$ be the bi-infinite linking sequence from $x$ to $y$ in $(X_0 \cap X_1, J_M(1))$. Fixing $N \in \mathbb{N}$, we have $$K_M(1; S_N^{x, y}, S_{-N}^{x, y}) \leq \sum_{j=-N}^{N-1} K_M(1; S_j^{x, y}, S_{j+1}^{x, y}).$$ Thus, taking the $\lim \sup_N$, we have
	$$K_M(1;x, y) \leq \sum_{j \in \mathbb{Z}} K_M(1; S_j^{x, y}, S_{j+1}^{x, y}).$$ Now, using Lemma \ref{lema 12}, with $a=1$ and $b=2^j$ in each term $K_M(1; S_j^{x, y}, S_{j+1}^{x, y})$, we have $$K_M(1;x, y) \leq \sum_{j \in \mathbb{Z}} K_M(1; S_j^{x, y}, S_{j+1}^{x, y}) \leq \sum_{j \in \mathbb{Z}} \min\{1, 1/2^j\} \cdot J_M(2^j; S_j^{x, y}, S_{j+1}^{x, y}).$$ We can also write, for all $\theta \in (0, 1)$: $$K_M(1;x, y) \leq  \sum_{j \in \mathbb{Z}} [2^{j \theta} \cdot \min\{1, 1/2^j\}] \cdot  [2^{-j \theta} \cdot J_M(2^j; S_j^{x, y}, S_{j+1}^{x, y})].$$ Using the \textit{Hölder Inequality} in the last sum, we have \begin{equation}					
		K_M(1; x, y) \leq M_{\theta, q} \cdot \Gamma_{\theta, q}((J_M(2^j; S_j^{x, y}, S_{j+1}^{x, y})_{j \in \mathbb{Z}}), \end{equation}
	where $$M_{\theta, q}:=\biggr[ \sum_{j \in \mathbb{Z}} [2^{j \theta} \cdot \min\{1, 1/2^j\}]^p \biggr]^{1/p}$$ with $\frac{1}{p} + \frac{1}{q}=1$. Note that from this inequality it follows that \begin{equation} \label{inclusao em K(1)}K_M(1; x, y) \leq M_{\theta, q} \cdot p_{\theta, q}(x, y). \end{equation}
	Furthermore, for given $x$ and $y$, the constant approximating sequences $x_n=x$ and $y_n=y$ for all natural $n$ are such that for these, the bi-infinite linking sequence $(S_k^{x, y})$ satisfies: $$S_k^{x, y}=\begin{cases}				
		x, & \text{if $k > 0$}\\ y, & \text{if $k \leq 0$}
	\end{cases}$$ Thus, for this bi-infinite sequence, we have $$\sum_{k \in \mathbb{Z}} [2^{-k \theta} \cdot J_M(2^k; S_k^{x, y}, S_{k+1}^{x, y})]^q = [ J_M(1; x, y)]^q.$$ From this, it follows that \begin{equation} \label{inclusao em J(1)} p_{\theta, q}(x, y) \leq \Gamma_{\theta, q}((J_M(2^k; S_k^{x, y}, S_{k+1}^{x, y})) \leq   J_M(1; x, y).\end{equation} From inequality \ref{inclusao em J(1)}, we have that $p_{\theta, q}(x, y) <+ \infty$ for all $x, y \in X_0 \cap X_1$ and from inequality \ref{inclusao em K(1)} we have that $p_{\theta, q}(x, y)=0 \Longleftrightarrow x=y$. \end{proof}

\begin{lemma} Let $\theta \in (0, 1)$ and $q \in [1, \infty]$. Given $x, y \in X_0 \cap X_1$, let us define the
	functional \begin{equation} P_{\theta, q}(x, y):= \frac{ p_{\theta, q}(x, y)+ p_{\theta, q}(y, x)}{2}.
	\end{equation} The functional $P_{\theta, q}: (X_0 \cap X_1) \times (X_0 \cap X_1) \to [0, \infty)$ satisfies
	
	\begin{enumerate}
		\item $P_{\theta, q}(x, y)=0 \Longleftrightarrow x=y$.
		\item $P_{\theta, q}(x, y)=P_{\theta, q}(y, x)$, for all $x, y \in X_0 \cap X_1$.
		\item $\min\{p_{\theta, q}(x, y); p_{\theta, q}(y, x)\} \leq P_{\theta, q}(x, y) \leq \max\{p_{\theta, q}(x, y); p_{\theta, q}(y, x)\}$, for all $x, y \in X_0 \cap X_1$.				
	\end{enumerate}
\end{lemma}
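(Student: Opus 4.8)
The plan is to read off all three properties directly from the fact, already established in the preceding Theorem, that $p_{\theta, q}$ is a separator functional on $X_0 \cap X_1$, combined with the elementary observation that for any nonnegative reals $a,b$ one has $\min\{a,b\} \le \tfrac{a+b}{2} \le \max\{a,b\}$. Crucially, by inequality \ref{inclusao em J(1)} the functional $p_{\theta, q}$ is genuinely $[0,\infty)$-valued (in particular finite), so the arithmetic mean defining $P_{\theta, q}$ makes sense and both summands $p_{\theta, q}(x,y)$ and $p_{\theta, q}(y,x)$ are nonnegative.

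For item 1, I would argue as follows. If $P_{\theta, q}(x,y) = 0$, then since it is the average of the two nonnegative numbers $p_{\theta, q}(x,y)$ and $p_{\theta, q}(y,x)$, both of them must vanish; in particular $p_{\theta, q}(x,y) = 0$, and the separator property of $p_{\theta, q}$ forces $x = y$. Conversely, if $x = y$, then $p_{\theta, q}(x,y) = p_{\theta, q}(x,x) = 0$ and likewise $p_{\theta, q}(y,x) = 0$, so $P_{\theta, q}(x,y) = 0$. This gives the equivalence in item 1.

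Item 2 is immediate from the commutativity of addition: $P_{\theta, q}(y,x) = \tfrac{p_{\theta, q}(y,x) + p_{\theta, q}(x,y)}{2} = \tfrac{p_{\theta, q}(x,y) + p_{\theta, q}(y,x)}{2} = P_{\theta, q}(x,y)$. For item 3, I would simply apply the elementary inequality recalled above with $a = p_{\theta, q}(x,y)$ and $b = p_{\theta, q}(y,x)$, since the arithmetic mean of two real numbers always lies between their minimum and their maximum.

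I do not anticipate any genuine obstacle in this lemma: it asserts only that $P_{\theta, q}$ is a symmetric separator, not that it is a metric, so no triangle inequality has to be checked, and the asymmetry of $p_{\theta, q}$ (stemming from the asymmetric definition of the bi-infinite linking sequence $S_k^{x,y}$ in $x$ and $y$) plays no role in the argument. The only point deserving explicit mention is the nonnegativity and finiteness of $p_{\theta, q}$, which is exactly what inequality \ref{inclusao em J(1)} supplies and which underpins the vanishing argument in item 1.
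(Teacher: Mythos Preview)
Your proposal is correct and follows exactly the intended line: the paper simply records ``The proof is immediate,'' and what you have written is precisely the routine unpacking of that remark, using the separator property of $p_{\theta,q}$ from the preceding theorem together with the elementary mean inequality. There is nothing to add or adjust.
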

\begin{proof} The proof is immediate. \end{proof}
\begin{definition} Given $\theta \in (0, 1)$ and $q \in [1, \infty]$ let us define the functional $\delta_{\theta, q}$ on
	$X_0 \cap X_1$,  given by
	\begin{equation} \delta_{\theta, q}(x, y):= \inf \sum_{j=0}^{n-1} P_{\theta, q}(x_j, x_{j+1}),
	\end{equation} where the infimum is taken over all finite sequences of points $x_0=x, x_1, x_2, x_3, \
	\ldots, x_n=y \in X_0 \cap X_1$.
\end{definition}
\begin{theorem} The functional $\delta_{\theta, q}$ as above is a metric on $X_0 \cap X_1$.

\end{theorem}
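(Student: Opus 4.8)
The plan is to verify the four metric axioms for $\delta_{\theta,q}$ on $X_0 \cap X_1$, writing $\delta := \delta_{\theta,q}$ and $P := P_{\theta,q}$ for brevity. Three of them are formal consequences of the chain-infimum construction together with the properties of $P$ established in the preceding lemma, and only the separation axiom $\delta(x,y)=0 \Rightarrow x=y$ requires an actual estimate. I would dispose of the easy parts first. Non-negativity is immediate since $P \geq 0$. Finiteness follows from the two-point chain $x_0 = x$, $x_1 = y$, which gives $\delta(x,y) \leq P(x,y) \leq \max\{p_{\theta,q}(x,y), p_{\theta,q}(y,x)\} < \infty$ by inequality \ref{inclusao em J(1)} and property (3) of $P$. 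For $\delta(x,x)=0$, use the chain $x_0 = x_1 = x$, whose $P$-sum is $P(x,x)=0$ by property (1) of $P$. Symmetry holds because, given any chain $x = x_0, x_1, \dots, x_n = y$, its reversal $y = x_n, x_{n-1}, \dots, x_0 = x$ is again an admissible chain and, since $P$ is symmetric (property (2)), has the same total $P$-cost; hence the two infima defining $\delta(x,y)$ and $\delta(y,x)$ agree. The triangle inequality is the standard concatenation argument: given $\varepsilon>0$, choose a chain from $x$ to $z$ and a chain from $z$ to $y$ with $P$-costs within $\varepsilon$ of $\delta(x,z)$ and $\delta(z,y)$ respectively; juxtaposing them produces an admissible chain from $x$ to $y$, so $\delta(x,y) \leq \delta(x,z) + \delta(z,y) + 2\varepsilon$, and letting $\varepsilon \to 0$ finishes it.

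The substantive step is the separation property, and here the idea is to bound $\delta$ from below by an honest metric. First I would upgrade inequality \ref{inclusao em K(1)}: since $K_M(1;\cdot,\cdot)$ is symmetric (Lemma \ref{prop 14}), averaging that inequality with the one obtained after interchanging $x$ and $y$ yields
$$K_M(1; x, y) \leq M_{\theta,q} \cdot P_{\theta,q}(x,y), \qquad x, y \in X_0 \cap X_1.$$
Then, for an arbitrary chain $x = x_0, x_1, \dots, x_n = y$ in $X_0 \cap X_1$, the triangle inequality for the metric $K_M(1)$ gives
$$K_M(1; x, y) \leq \sum_{j=0}^{n-1} K_M(1; x_j, x_{j+1}) \leq M_{\theta,q} \sum_{j=0}^{n-1} P_{\theta,q}(x_j, x_{j+1}).$$
Taking the infimum over all such chains produces $K_M(1; x, y) \leq M_{\theta,q}\,\delta(x,y)$. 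Because $K_M(1)$ is a genuine metric on $X_0 \cup X_1$ (Lemma \ref{prop 14}), $\delta(x,y) = 0$ forces $K_M(1;x,y)=0$, hence $x=y$, which is exactly what is needed.

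I do not expect a deep obstacle here; the only thing that needs care is making sure the constant $M_{\theta,q}$ of inequality \ref{inclusao em K(1)} is genuinely transferred to $P_{\theta,q}$ (via the symmetrization of $K_M(1)$) and then survives the chain estimate, so that the lower bound $K_M(1;x,y) \leq M_{\theta,q}\,\delta(x,y)$ is valid. Once that bound against the metric $K_M(1)$ is in place, positivity is automatic and the argument is just the familiar pattern ``an infimum over chains of a symmetric point-separating functional is a pseudometric, and a lower bound by an honest metric upgrades it to a metric.''
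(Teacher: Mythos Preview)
Your proof is correct and follows essentially the same route as the paper: the easy axioms are handled via the chain-infimum formalism and the symmetry of $P_{\theta,q}$, while separation is obtained by transferring inequality~\eqref{inclusao em K(1)} from $p_{\theta,q}$ to $P_{\theta,q}$ (the paper uses the $\min$ bound from property~(3) where you average, to the same effect) and then propagating it through chains to get $K_M(1;x,y)\le M_{\theta,q}\,\delta_{\theta,q}(x,y)$.
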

\begin{proof} It is easy to verify that $\delta_{\theta, q}(x, x)=0$. Furthermore, for all $x, y \in X_0 \cap X_1$, taking the sequence $x_0=x$ and $x_1=y$, we have $$0 \leq \delta_{\theta, q}(x, y) \leq P_{\theta, q}(x, y)< + \infty.$$ Let now $x, y \in X_0 \cap X_1$ such that $\delta_{\theta, q}(x, y)=0$. From inequality \ref{inclusao em K(1)}, we have $$K_M(1; x, y)=K_M(1; y, x) \leq M_{\theta, q} \cdot p_{\theta, q} (x, y),$$ for all $x, y \in X_0 \cap X_1$. Thus, it is also true that $K_M(1; x, y) \leq M_{\theta, q} \cdot p_{\theta, q}(y, x)$. Therefore,
	\begin{equation}K_M(1; x, y) \leq M_{\theta, q} \cdot \min\{p_{\theta, q}(x, y); p_{\theta, q}(y, x)\} \leq M_{\theta, q} \cdot P_{\theta, q}(x, y),\end{equation}  for any $x, y \in X_0 \cap X_1$. For any linking sequence $x_0=x, x_1, x_2, \ldots, x_n=y$ we have \begin{align*}K_M(1; x, y) &\leq \sum_{j=0}^{n-1} K_M(1; x_j, x_{j+1})\\ & \leq M_{\theta, q} \cdot \sum_{j=0}^{n-1} P_{\theta, q}(x_j, x_{j+1}).\end{align*} Consequently, we have \begin{equation} \label{inclusao do interpolado em K(1)}	
		K_M(1; x, y) \leq M_{\theta, q} \cdot \delta_{\theta, q}(x, y), \quad x, y \in X_0 \cap X_1. \end{equation} This inequality guarantees the positivity of $\delta_{\theta, q}$.
	The symmetry of $\delta_{\theta, q}$ comes from the symmetry of $P_{\theta, q}$ and the fact that every linking sequence from $x$ to $y$ in $X_0 \cap X_1$ is also a linking sequence from $y$ to $x$ in $X_0 \cap X_1$.
	For the triangle inequality, let $x, y , z \in X_0 \cap X_1$ and $\varepsilon>0$. Then, there exist linking sequences $x_0=x, x_1, x_2, \ldots, x_n=z$ from $x$ to $z$ and $y_0=z, y_1, y_2, \ldots, y_m=y$ from $z$ to $y$, such that $$\delta_{\theta, q}(x, z) \leq \sum_{j=0}^{n-1} P_{\theta, q}(x_j, x_{j+1}) < \delta_{\theta, q}(x, z)+ \varepsilon$$ and $$\delta_{\theta, q}(z, y) \leq \sum_{j=0}^{m-1} P_{\theta, q}(y_j, j_{j+1}) < \delta_{\theta, q}(z, y)+ \varepsilon.$$ Note that the sequence $$z_0=x, z_1, \ldots, z_n=z, z_{n+1}=z, z_{n+2}, \ldots, z_{n+m}=y$$ is a linking sequence from $x$ to $y$ in $X_0 \cap X_1$ and satisfies \begin{align*}\delta_{\theta, q}(x, y)& \leq \sum_{j=0}^{m+n-1} P_{\theta, q}(z_j, z_{j+1})\\					
		& = \sum_{j=0}^{n-1}P_{\theta, q}(x_j, x_{j+1})+ \sum_{j=0}^{m-1} P_{\theta, q}(y_j, y_{j+1})\\
		&< \delta_{\theta, q}(x, z)+ \delta_{\theta, q}(z, y) + 2 \varepsilon. \end{align*} From the arbitrariness of $\varepsilon>0$ it follows $$\delta_{\theta, q}(x, y) \leq \delta_{\theta, q}(x, z)+ \delta_{\theta, q}(z, y).$$ Therefore, $\delta_{\theta, q}$ is indeed a metric on $X_0 \cap X_1$. \end{proof}
\begin{remark} Inequality \ref{inclusao do interpolado em K(1)} ensures that the metric space $(X_0 \cap X_1, \delta_{\theta, q})$ is continuously included in the metric space $(X_0 \cup X_1, K_M(1))$, thus satisfying the conditions for relative completion. \end{remark}

\begin{definition}[The $R$-interpolated metric space] The \textbf{$R$-interpolated metric space} is denoted by $R_{\theta, q}^{\overrightarrow{X}}$ and is the relative completion of the metric space $$(X_0 \cap X_1, \delta_{\theta, q})$$ in the metric space $(X_0 \cup X_1, K_M(1))$. The metric of this space is denoted by $\Delta_{\theta, q}$ and equals $\Delta_{\theta, q}(x, y) = \delta_{\theta, q}(x, y)$, for all $x, y \in X_0 \cap X_1$.
\end{definition}
\begin{remark} An important fact is that the interpolated spaces above were defined as the completion of the intersection $X_0 \cap X_1$ with a specific metric in the metric space $ (X_0 \cup X_1, K_M(1))$. This ensures that the elements of the interpolated spaces are contained in $X_0 \cup X_1$. However, note that such metrics on the intersection also satisfy the conditions for completion in $(X, d_X)$ which can be substantially ``larger'' than $X_0 \cup X_1$, providing many more new elements to the completions. That is why the name ``relative'' was chosen for this type of completion. We do not study here the advantages of completing the spaces beyond $X_0 \cup X_1$. \end{remark}

\section{Properties of the $R$-Interpolated Metric Space}
\begin{theorem} The inclusions $X_0 \cap X_1 \subseteq R_{\theta, q}^{\overrightarrow{X}} \subseteq X_0 \cup X_1$ are valid and furthermore \begin{enumerate}
		\item $\Delta_{\theta, q}(x, y) = \delta_{\theta, q}(x, y) \leq J_M(1; x, y), \quad x, y \in X_0 \cap X_1$.
		\item $K_M(1; x, y) \leq M_{\theta, q} \cdot \Delta_{\theta, q}(x, y), \quad x, y \in R_{\theta, q}^{\overrightarrow{X}}$.
\end{enumerate} \end{theorem}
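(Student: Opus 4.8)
\section*{Proof proposal}

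The plan is to reduce everything to the general theory of relative completion recalled in Section~1, combined with the inequalities already proved for $\delta_{\theta,q}$. By definition, $(\overrightarrow{X}_{\theta, q}^{J_M})^{\prime}$ is the relative completion of $(X_0\cap X_1,\delta_{\theta,q})$ inside $(X_0\cup X_1,K_M(1))$, and inequality~\ref{inclusao do interpolado em K(1)} together with the Remark following it shows that this pair satisfies the hypothesis of Definition~1.1 with constant $c=M_{\theta,q}$. Hence the five listed properties of the relative completion apply verbatim: the chain $A\subseteq A_B\subseteq B$ gives the inclusions $X_0\cap X_1\subseteq(\overrightarrow{X}_{\theta, q}^{J_M})^{\prime}\subseteq X_0\cup X_1$, and the property ``$d_{A,B}$ and $d_A$ coincide on $A$'' gives $\Delta_{\theta,q}(x,y)=\delta_{\theta,q}(x,y)$ for $x,y\in X_0\cap X_1$, which is the equality in item~1.

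For the inequality in item~1, I would evaluate the defining infimum of $\delta_{\theta,q}$ on the trivial one-step linking sequence $x_0=x$, $x_1=y$, obtaining $\delta_{\theta,q}(x,y)\le P_{\theta,q}(x,y)$. Since $P_{\theta,q}(x,y)=\tfrac12\bigl(p_{\theta,q}(x,y)+p_{\theta,q}(y,x)\bigr)$, inequality~\ref{inclusao em J(1)} gives $p_{\theta,q}(u,v)\le J_M(1;u,v)$ for all $u,v\in X_0\cap X_1$, and Lemma~\ref{prop 15} gives the symmetry $J_M(1;y,x)=J_M(1;x,y)$. Combining these yields $P_{\theta,q}(x,y)\le J_M(1;x,y)$, hence $\delta_{\theta,q}(x,y)\le J_M(1;x,y)$.

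For item~2, the estimate $K_M(1;x,y)\le M_{\theta,q}\,\delta_{\theta,q}(x,y)$ on $X_0\cap X_1$ is exactly inequality~\ref{inclusao do interpolado em K(1)}; the task is to upgrade it from the dense subset $X_0\cap X_1$ to all of $(\overrightarrow{X}_{\theta, q}^{J_M})^{\prime}$. This is precisely property~(1) of the relative completion, which asserts $d_B(x,y)\le c\,d_{A,B}(x,y)$ on all of $A_B$. If a direct argument is preferred, I would fix $x,y\in(\overrightarrow{X}_{\theta, q}^{J_M})^{\prime}$, choose approximating sequences $(x_n),(y_n)\subseteq X_0\cap X_1$ with $\Delta_{\theta,q}(x_n,x)\to 0$ and $\Delta_{\theta,q}(y_n,y)\to 0$ (these exist by density of $X_0\cap X_1$ in the completion), observe that the same sequences converge to $x,y$ in $K_M(1)$ because the inclusion $(\overrightarrow{X}_{\theta, q}^{J_M})^{\prime}\hookrightarrow(X_0\cup X_1,K_M(1))$ is continuous, apply~\ref{inclusao do interpolado em K(1)} to get $K_M(1;x_n,y_n)\le M_{\theta,q}\,\Delta_{\theta,q}(x_n,y_n)$, and pass to the limit, using the triangle inequality for each metric to see that $K_M(1;x_n,y_n)\to K_M(1;x,y)$ and $\Delta_{\theta,q}(x_n,y_n)\to\Delta_{\theta,q}(x,y)$.

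The main obstacle here is modest and essentially bookkeeping: one must make sure that the hypothesis of the relative-completion construction is genuinely satisfied (namely that $\delta_{\theta,q}$ dominates $K_M(1)$ up to the constant $M_{\theta,q}$ on $X_0\cap X_1$, which is~\ref{inclusao do interpolado em K(1)}), so that all the listed properties may be invoked, and that the limiting argument in item~2 is legitimate, i.e.\ that a sequence converging in $\Delta_{\theta,q}$ is simultaneously Cauchy and convergent in $K_M(1)$. Both points follow immediately from the framework, so no essential difficulty is expected.
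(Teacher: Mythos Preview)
Your proposal is correct and follows essentially the same route as the paper: the inclusions and the equality $\Delta_{\theta,q}=\delta_{\theta,q}$ on $X_0\cap X_1$ come from the relative-completion framework, item~1 is obtained from inequality~\ref{inclusao em J(1)} combined with the symmetry of $J_M(1;\cdot,\cdot)$ and the trivial linking sequence, and item~2 is deduced from inequality~\ref{inclusao do interpolado em K(1)}. If anything, you are more careful than the paper, which simply asserts that item~2 ``is obtained directly'' from~\ref{inclusao do interpolado em K(1)} without spelling out the passage from $X_0\cap X_1$ to the full completion; your density/limit argument (or the appeal to property~(1) of the relative completion) makes that step explicit.
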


\begin{proof} The inclusions are immediate. We will show only the inequalities. Let $x, y \in X_0 \cap X_1$. From inequality \ref{inclusao em J(1)}, we have $$p_{\theta, q}(x, y) \leq J_M(1; x, y).$$ Since $J_M(1; x , y) =J_M(1; y, x)$, it holds that $$\max\{p_{\theta, q}(x, y); p_{\theta, q}(y, x)\} \leq  J_M(1; x, y).$$ Thus, $P_{\theta, q}(x, y) \leq  J_M(1; x, y)$. For the trivial linking sequence $x_0=x$ and $x_1=y$, we have \begin{equation}
		\delta_{\theta, q}(x, y) \leq P_{\theta, q}(x, y) \leq J_M(1; x, y). \end{equation}

	The second inequality is obtained directly from inequality \ref{inclusao do interpolado em K(1)}. Therefore, the inclusions are continuous, as we wanted to demonstrate. \end{proof}
\begin{definition} Let $(X, d_X)$ and $(Y, d_Y)$ be metric spaces and $T:X \to Y$ an operator. We say that
	$T$ is a \textbf{closed operator} if $$[d_X(x_n, x) \to 0 \quad \text{and} \quad d_Y(T(x_n), y) \to 0 ]\implies y=T(x).$$ In other words, the set $$\text{Gr}(T):=\{(x, T(x)) \in X \times Y\}$$ is a closed set in $X \times Y$ in the product topology. \end{definition}

\begin{definition} Let $(X, d_X)$, $(y, d_Y)$ and $T: X \to Y$ an operator. We say that the operator $T$ is
	a \textbf{Lipschitz operator} if there exists a positive constant $C$ such that $$d_Y(T(x), T(y)) \leq C \cdot d_X(x, y), \quad \forall x, y \in X.$$
	When $T$ is a Lipschitz operator, we call the number $$\inf_{C>0}\{d_Y(T(x), T(y)) \leq C \cdot d_X(x, y); \quad \forall x, y \in X\}$$ the \textbf{Lipschitz constant of the operator $T$}. \end{definition}
\begin{lemma}[Reindexing and interpolation factor] \label{reindexacao} Let $(X, d_X)$ and $(Y, d_Y)$ be metric spaces and $T:X \to Y$ an operator. Let \((S_k)_{k\in\mathbb Z}\) be a linking sequence from $x$ to $y$ in $X$ (which for simplicity of notation we will omit the superscript $x, y$) and let
	\[ a_k:=2^{-k\theta}J_M(2^k\lambda;\,T (S_k), T (S_{k+1}): Y),\qquad b_k:=2^{-k\theta}J_M(2^k;\,S_k,S_{k+1}: X), \] where \(\lambda>0\) and \(\theta\in(0,1)\). Suppose that for all \(k\in\mathbb Z\) it holds that \begin{equation}\label{hyp:basic}
		J_M(2^k;S_k,S_{k+1}:X) \;\ge\; \omega_0^{-1}\,J_Y(2^k\lambda;\,T (S_k), T (S_{k+1}):Y), \quad \omega_0>0.
		\tag{H} \end{equation} For $\omega_1>0$ define \(\lambda=\dfrac{\omega_0}{\omega_1}\). Then:
	\begin{enumerate} \item[(i)] If \(\lambda=2^r\) for some \(r\in\mathbb Z\), then, for all \(q\in[1,\
		infty]\), \[ \big\| (a_k)_k\big\|_{\ell^q} \le \omega_0^{\,1-\theta}\omega_1^{\,\theta}\;\big\| (b_k)_k\big\|_{\ell^q}. \]
		\item[(ii)] If \(\lambda>0\) is arbitrary, then, for all \(q\in[1,\infty]\), \[ \big\| (a_k)_k\big\|_{\ell^q} \le C(\theta)\,\omega_0^{\,1-\theta}\omega_1^{\,\theta}\;\big\|(b_k)_k\big\|_{\ell^q}, \] where \(C(\theta)\le 2^{\theta}\) is a constant depending only on \(\theta\).

	\end{enumerate}
	
\end{lemma}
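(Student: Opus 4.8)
The plan is to carry over to the present metric setting the classical change-of-index computation that makes the $J$-functor exact of type $\theta$; apart from hypothesis \eqref{hyp:basic}, the only ingredients needed are the monotonicity and quasi-homogeneity of $J_M$ in its scale parameter (Lemma~\ref{lema 10}) and the shift-invariance of the $\ell^q$-norms on $\mathbb{Z}$.

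First I would extract the pointwise form of \eqref{hyp:basic}: read at the index $k$ and multiplied by $2^{-k\theta}$, it says $a_k \le \omega_0\, b_k$ for every $k$, hence $\|(a_k)_k\|_{\ell^q} \le \omega_0\,\|(b_k)_k\|_{\ell^q}$ for all $q \in [1,\infty]$. This is off from the asserted estimate exactly by the interpolation factor $\lambda^{-\theta} = \omega_1^{\theta}\omega_0^{-\theta}$, and the point is to recover it by reindexing. Since \eqref{hyp:basic} pairs the $Y$-side at scale $2^k\lambda$ with the $X$-side at scale $2^k$, when $\lambda = 2^r$ one instead applies it at the index $k-r$: there the $X$-scale $2^{k-r}$ is matched exactly by the $Y$-scale $2^{k-r}\lambda = 2^{k}$, and performing the substitution $j = k-r$ in the defining sum converts the weight $2^{-k\theta}$ into $2^{-r\theta}2^{-j\theta}$. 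After undoing the shift on the $X$-side, the right-hand side becomes $\omega_0\,2^{-r\theta}\,\|(b_j)_j\|_{\ell^q}$, and since $2^{-r\theta} = \lambda^{-\theta}$ and $\omega_0\lambda^{-\theta} = \omega_0^{1-\theta}\omega_1^{\theta}$ this is precisely (i) (the case $q = \infty$ being the same with a supremum in place of the sum).

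For (ii) I would round $\lambda$ to a dyadic scale: pick $r \in \mathbb{Z}$ with $2^r \le \lambda < 2^{r+1}$, so that $1 \le \lambda 2^{-r} < 2$, and rerun the shift-by-$r$ argument. Now the $X$-scale $2^{k-r}$ is matched by $2^{k-r}\lambda = (\lambda 2^{-r})\,2^k \ge 2^k$, so before invoking \eqref{hyp:basic} one first enlarges the $Y$-scale from $2^k$ up to $2^{k-r}\lambda$, which is free by the monotone special case of Lemma~\ref{lema 10}. The bookkeeping is as in (i) and gives $\|(a_k)_k\|_{\ell^q} \le \omega_0\,2^{-r\theta}\,\|(b_k)_k\|_{\ell^q}$; writing $2^{-r\theta} = (\lambda 2^{-r})^{\theta}\,\lambda^{-\theta}$ and using $\lambda 2^{-r} < 2$ yields the constant $C(\theta) = (\lambda 2^{-r})^{\theta} \le 2^{\theta}$, which equals $1$ exactly in the dyadic case so that (ii) specializes to (i).

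The step I expect to be the real obstacle is the honest accounting of the two competing shifts --- the scale shift $2^k \mapsto 2^k\lambda$ forced by \eqref{hyp:basic} and the compensating index shift $k \mapsto k-r$ --- weighed against the geometric factor $2^{-k\theta}$: it is only their exact interplay that turns the crude constant $\omega_0$ into the sharp $\omega_0^{1-\theta}\omega_1^{\theta}$, and at that point one must check that the reindexed linking sequence in $Y$ is still admissible and that no loss is concealed in the renumbering. A secondary point is that in (ii) the inequality drawn from Lemma~\ref{lema 10} must be the scale-enlarging one, so that the residual factor is controlled by $2^{\theta}$ rather than merely by $2$.
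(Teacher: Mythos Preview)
Your approach is correct and coincides with the paper's: both arguments extract the pointwise bound $a_k\le\omega_0 b_k$ from \eqref{hyp:basic}, then in case~(i) perform the index shift by $r$ (the paper writes $m=k+r$, you write $j=k-r$, which is the same reindexing) and invoke shift-invariance of the $\ell^q$-norm to turn $\omega_0$ into $\omega_0\lambda^{-\theta}=\omega_0^{1-\theta}\omega_1^{\theta}$; in case~(ii) both round $\lambda$ to the dyadic $2^r$ with $2^r\le\lambda<2^{r+1}$, use the monotonicity of $s\mapsto J_M(s;\cdot,\cdot)$ (the scale-enlarging direction of Lemma~\ref{lema 10}) to pass from $2^k$ to $2^{k-r}\lambda$, and absorb the residual factor $(\lambda 2^{-r})^{\theta}<2^{\theta}$ into $C(\theta)$. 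Your remark that the admissibility of the reindexed $Y$-sequence must be checked is extraneous to this lemma itself (which is a pure $\ell^q$ statement) but is indeed the relevant point when the lemma is applied in Theorem~\ref{interpolacao de lip}.
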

\begin{proof} We start by observing that hypothesis \eqref{hyp:basic} is equivalent, in terms of the terms
	$a_k,b_k$, to \[ b_k \;\ge\; \omega_0^{-1} a_k, \qquad\text{that is}\qquad a_k \le \omega_0\, b_k, \qquad\forall k\in\mathbb Z. \tag{1} \]
	\textbf{If  \(\lambda=2^r\), \(r\in\mathbb Z\):} Writing \(\lambda=2^r\) and making the index change \(m=k+r\) we obtain, for each \(k\), \[ a_k =2^{-k\theta}J_M(2^k\lambda;\,T (S_k),T (S_{k+1}):Y) =2^{-k\theta}J_M(2^{k+r};\,T (S_k),T (S_{k+1}):Y). \] Rewriting in terms of the index \(m\) (with \(m=k+r\)) we have the identity \begin{align*} a_k& =2^{r\theta}\,2^{-(k+r)\theta}J_M(2^{k+r};\,T (S_k),T (S_{k+1}):Y)\\ &=\lambda^{\theta}\,2^{-m\theta}J_M(2^{m};\,T (S_{m-r}),T (S_{m-r+1}):Y). \end{align*} By hypothesis \eqref{hyp:basic} applied to the index \(m-r\) we have \begin{align*} &2^{-m\theta}J_M(2^{m};\,T (S_{m-r}),T( S_{m-r+1}):Y) \leq\\ &\omega_0\,2^{-(m-r)\theta}J_M(2^{m-r};\,S_{m-r},S_{m-r+1}:X). \end{align*} Combining the two expressions and using reindexing (the transformation \(k\mapsto	m=k+r\) is an index shift), we obtain \begin{align*} &2^{-m\theta}J_M(2^{m};\,T (S_{m}),T( S_{m+1}):Y) \le \\ &\omega_0\,\lambda^{-\theta}\,2^{-(m-r)\theta}J_M(2^{m-r};\,S_{m-r},S_{m-
			r+1}:X). \end{align*} Taking the $\ell^q$ norm of both sides and remembering that the index shift is
	isometric in~$\ell^q$ (that is, $\|(u_{m-r})_m\|_{\ell^q}=\|(u_k)_k\|_{\ell^q}$), it results in \begin{align*} &\big\| (2^{-m\theta}J_M(2^{m};T (S_{m}),T (S_{m+1});Y))_m\big\|_{\ell^q} \le\\ & \omega_0\,\lambda^{-\theta}\, \big\|(2^{-k\theta}J_X(2^{k};S_{k},S_{k+1}:X))_k\big\|_{\ell^q}. \end{align*} But the left-hand side is exactly $\|(a_k)_k\|_{\ell^q}$ (after reindexing), and the right-hand side is $\|(b_k)_k\|_{\ell^q}$. Substituting $\lambda=\dfrac{\omega_0}{\omega_1}$ we obtain
	\[ \big\|(a_k)_k\big\|_{\ell^q}
	\le \omega_0\Big(\frac{\omega_0}{\omega_1}\Big)^{-\theta}\big\|(b_k)_k\big\|_{\ell^q} = \omega_0^{\,1-\theta}\omega_1^{\,\theta}\big\|(b_k)_k\big\|_{\ell^q}, \] which proves (i).
	
	\textbf{Arbitrary Case \(\lambda>0\):} Choose \(r=\lfloor\log_2\lambda\rfloor\in\mathbb Z\) such that \(2^r\le\lambda<2^{r+1}\). For all \(k\) we have the two elementary estimates \[ J_M(2^{k+r};\cdot, \cdot:Y)\;\le\; J_M(2^k \lambda;\cdot, \cdot:Y)\;\le\; J_Y(2^{k+r+1};\cdot, \cdot:Y), \] because the function \(s\mapsto J_M(2^k s;\cdot, \cdot:Y)\) is non-decreasing in \(s>0\)
	(remembering that here only the multiplicative parameter within the second argument of \(J_M\) varies). Multiplying by \(2^{-k\theta}\) and reindexing as before, we obtain
	\begin{align*} 2^{r\theta}\,2^{-(k+r)\theta}J_M(2^{k+r};\cdot, \cdot:Y) &\le 2^{-k\theta}J_M(2^k\lambda;\cdot, \cdot:Y)\\ &\le 2^{(r+1)\theta}\,2^{-(k+r+1)\theta}J_M(2^{k+r+1};\cdot, \cdot :Y). \end{align*} Applying inequality \eqref{hyp:basic} (in the form used in (i)) to the appropriate indices	and taking the $\ell^q$ norm, we arrive at inequalities of the type \[ \big\|(a_k)_k\big\|_{\ell^q} \le \omega_0\,2^{\theta}\,\lambda^{-\theta}\big\|(b_k)_k\big\|_{\ell^q} \quad\text{or}\quad \big\|(a_k)_k\big\|_{\ell^q} \le \omega_0\,\lambda^{-\theta}\big\|(b_k)_k\big\|_{\ell^q}, \] depending on whether we use the majorant or the minorant above. In both cases we can
	write compactly \[ \big\|(a_k)_k\big\|_{\ell^q} \le C(\theta)\,\omega_0\,\lambda^{-\theta}\big\|(b_k)_k\big\|_{\ell^q}, \qquad C(\theta)\le 2^{\theta}. \] Substituting \(\lambda=\dfrac{\omega_0}{\omega_1}\) results in \[ \big\|(a_k)_k\big\|_{\ell^q} \le C(\theta)\,\omega_0^{\,1-\theta}\omega_1^{\,\theta}\big\|(b_k)_k\big\|_{\ell^q}, \] as we wanted to demonstrate.	
\end{proof}

\begin{theorem} \label{interpolacao de lip} Let $(X_0, d_{X_0})$ and $(X_1, d_{X_1})$ be a pair of compatible metric spaces with
	respect to the metric space $(X, d_X)$ and $(Y_0, d_{Y_0})$ and $(Y_1, d_{Y_1})$ a pair of compatible metric spaces with respect to the metric space $(Y, d_Y)$. If $(Y, d_Y)$ is complete and $T: X \to Y$ is a closed operator that satisfies: \begin{enumerate}
		\item $T_0:=T|_{X_0}: X_0 \to Y_0$ is Lipschitzian with Lipschitz constant $\omega_0>0$
		\item $T_1:=T|_{X_1}: X_1 \to Y_1$ is Lipschitzian with Lipschitz constant $\omega_1>0$						
	\end{enumerate} Then, $T_{\theta, q}:=T|_{R_{\theta, q}^{\overrightarrow{X}}}: R_{\theta, q}^{\overrightarrow{X}} \to R_{\theta, q}^{\overrightarrow{Y}}$ is Lipschitzian with Lipschitz constant not exceeding $2^{\theta} \cdot  \omega_0^{1- \theta} \cdot \omega_1^{\theta}>0$.
\end{theorem}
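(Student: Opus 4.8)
The plan is to show that $T$ maps $(\overrightarrow{X}_{\theta,q}^{J_M})^{\prime}$ into $(\overrightarrow{Y}_{\theta,q}^{J_M})^{\prime}$ and is Lipschitz there, by first establishing the estimate on the dense subset $X_0 \cap X_1$ and then extending it through the relative completion. The key point for the dense part is to control $\delta_{\theta,q}^{Y}(T(x),T(y))$ by $\delta_{\theta,q}^{X}(x,y)$ for $x,y \in X_0 \cap X_1$. First I would observe that since $T_0$ and $T_1$ are Lipschitz with constants $\omega_0$ and $\omega_1$, for any $u,v \in X_0\cap X_1$ we have $d_{Y_0}(Tu,Tv)\le \omega_0 d_{X_0}(u,v)$ and $d_{Y_1}(Tu,Tv)\le\omega_1 d_{X_1}(u,v)$, hence $T$ maps $X_0\cap X_1$ into $Y_0\cap Y_1$ and, with $\lambda=\omega_0/\omega_1$,
\[
J_M(2^k\lambda;\,Tu,Tv:Y)=\max\{\omega_0^{-1}\omega_0\, d_{Y_0}(Tu,Tv),\;2^k\lambda\, d_{Y_1}(Tu,Tv)\}\le \omega_0\,\max\{d_{X_0}(u,v),\,2^k d_{X_1}(u,v)\}=\omega_0\,J_M(2^k;\,u,v:X),
\]
which is exactly hypothesis (H) of Lemma \ref{reindexacao} applied to any linking sequence $(S_k)$.

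Next I would fix $x,y\in X_0\cap X_1$ and an arbitrary linking sequence $z_0=x,z_1,\dots,z_n=y$ in $X_0\cap X_1$ realizing $\delta_{\theta,q}^{X}(x,y)$ up to $\varepsilon$, and for each consecutive pair $z_j,z_{j+1}$ pick near-optimal approximating sequences in $(X_0\cap X_1,J_M(1))$ so that the associated bi-infinite linking sequence $(S_k^{z_j,z_{j+1}})$ nearly realizes $p_{\theta,q}(z_j,z_{j+1})$. Applying $T$ to each of these bi-infinite sequences gives bi-infinite sequences in $Y_0\cap Y_1$; because $T_0,T_1$ are Lipschitz (hence continuous) and the $J_M(1)$-Cauchy property passes through, these are legitimate approximating sequences for $T(z_j),T(z_{j+1})$ in $(Y_0\cap Y_1,J_M(1))$, so they are admissible in the infimum defining $p_{\theta,q}(T(z_j),T(z_{j+1}))$. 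Then Lemma \ref{reindexacao}(ii) with $\omega_0,\omega_1$ as given yields
\[
\Gamma_{\theta,q}\big((J_M(2^k;\,T S_k^{z_j,z_{j+1}},T S_{k+1}^{z_j,z_{j+1}}:Y))\big)\le 2^{\theta}\,\omega_0^{1-\theta}\omega_1^{\theta}\,\Gamma_{\theta,q}\big((J_M(2^k;\,S_k^{z_j,z_{j+1}},S_{k+1}^{z_j,z_{j+1}}:X))\big),
\]
so $p_{\theta,q}(T(z_j),T(z_{j+1}))\le 2^{\theta}\,\omega_0^{1-\theta}\omega_1^{\theta}\,(p_{\theta,q}(z_j,z_{j+1})+\varepsilon')$; symmetrizing gives the same bound for $P_{\theta,q}$, and summing over $j$ together with the fact that $T(z_0)=Tx,\dots,T(z_n)=Ty$ is a linking sequence in $Y_0\cap Y_1$ gives $\delta_{\theta,q}^{Y}(Tx,Ty)\le 2^{\theta}\,\omega_0^{1-\theta}\omega_1^{\theta}\,\delta_{\theta,q}^{X}(x,y)$ after letting $\varepsilon,\varepsilon'\to 0$.

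Finally I would extend this to the whole interpolated space. Given $x\in(\overrightarrow{X}_{\theta,q}^{J_M})^{\prime}$, take an approximating sequence $(x_n)$ in $(X_0\cap X_1,\delta_{\theta,q})$ with $K_M(1;x_n,x)\to 0$; by the dense-subset estimate, $(Tx_n)$ is $\delta_{\theta,q}^{Y}$-Cauchy, hence also $K_M(1)$-Cauchy in $Y_0\cup Y_1$ by inequality \ref{inclusao do interpolado em K(1)}, and since $(Y,d_Y)$ is complete so is $(\overrightarrow{Y}_{\theta,q}^{J_M})^{\prime}$, so $(Tx_n)$ converges in it to some $y^*$ with $K_M(1;Tx_n,y^*)\to 0$; closedness of $T$ (using that $K_M(1)$ dominates $d_X$ and $d_Y$ up to constants, so $K_M$-convergence implies $d$-convergence) forces $y^*=Tx$, showing $Tx\in(\overrightarrow{Y}_{\theta,q}^{J_M})^{\prime}$. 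Passing to the limit in the inequality $\Delta_{\theta,q}^{Y}(Tx_n,Tx_m)\le 2^{\theta}\omega_0^{1-\theta}\omega_1^{\theta}\,\Delta_{\theta,q}^{X}(x_n,x_m)$ and using continuity of the metrics gives the Lipschitz bound with constant $2^{\theta}\omega_0^{1-\theta}\omega_1^{\theta}$ on all of $(\overrightarrow{X}_{\theta,q}^{J_M})^{\prime}$. The main obstacle I expect is the bookkeeping in the second paragraph: verifying that the images under $T$ of near-optimal bi-infinite linking sequences remain genuine approximating sequences in $(Y_0\cap Y_1,J_M(1))$ (so that they are admissible competitors for $p_{\theta,q}$ on the $Y$ side), and handling the two nested layers of $\varepsilon$-optimization (over the finite linking sequence for $\delta_{\theta,q}$ and over the approximating sequences for each $p_{\theta,q}$) cleanly; the genuinely analytic content is entirely contained in Lemma \ref{reindexacao}, which does the $\lambda=\omega_0/\omega_1$ rescaling and the factor $2^{\theta}$ for me.
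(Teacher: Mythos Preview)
Your proposal is correct and follows essentially the same route as the paper: verify hypothesis (H) of Lemma~\ref{reindexacao} from the two Lipschitz bounds, apply the lemma to get $p_{\theta,q}(T(x),T(y))\le 2^{\theta}\omega_0^{1-\theta}\omega_1^{\theta}\,p_{\theta,q}(x,y)$ on $X_0\cap X_1$, and then pass to $P_{\theta,q}$, $\delta_{\theta,q}$, and finally $\Delta_{\theta,q}$ on the relative completion. You are in fact more explicit than the paper about the symmetrization, the linking-sequence sum, and the extension step via completeness of $(Y,d_Y)$ and closedness of $T$; the paper compresses all of that into the sentence ``From this inequality, it is immediate that\ldots''.
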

\begin{proof} Let $x, y \in X_0 \cap X_1$ and $(x_n)$ and $(y_n)$ be two approximating sequences of $x$ and $y$ in $X_0 \cap X_1$. Let us define the bi-infinite linking sequence $(S_k^{x, y})$ that links $x$ to $y$ in $X_0 \cap X_1$ as usual through these. Since $(x_n)$ is Cauchy in $(X_0 \cap X_1, J_M(1))$, it follows that $(x_n)$ is Cauchy in $(X_0, d_{X_0})$ and in $(X_1, d_{X_1})$, because \begin{align*}J_M(1; x_m, x_n: X)&:= \max\{d_{X_0}(x_m, x_n), d_{X_1} (x_m, x_n)\}\\								&\geq d_{X_i}(x_m, x_n), \quad i=0, 1.\end{align*} Since $(x_n)$ is Cauchy in $(X_i, d_{X_i})$, $i=0, 1$, due to the fact that $$d_{Y_i}(T_i(x_m), T_i(x_n)) \leq \omega_i \cdot d_{X_i}(x_m, x_n), \quad i=0, 1,$$  we have that $(T(x_n))$ is a Cauchy sequence both in $(Y_0, d_{Y_0})$ and in $(Y_1, d_{Y_1})$. Since by hypothesis there exists $C_i>0$ such that $d_{Y}(T(x_m), T(x_n)) \leq C_i \cdot d_{Y_i}(T(x_m), T(x_n)) $,  for $i=0, 1$, we have that $(T(x_n))$ is a Cauchy sequence in $(Y, d_Y)$, which is complete, by hypothesis. It follows that there exists $a \in Y$ such that $d_Y(T(x_n), a) \to 0$. By the closedness property of the operator $T$, we have $a=T(x)$. The same reasoning can be applied to the sequence $(y_n)$ which will provide $b \in Y$ such that $T(y)=b$.		
	Note that if $(T(S_k^{x, y}))$ is a bi-infinite linking sequence from $T(y)$ to $T(x)$ in $Y_0 \cap Y_1$. Thus, since \begin{align*}&J_M(2^k; S_k^{x, y}, S_{k+1}^{x, y}: X)= \\
		&\max\{d_{X_0}(S_k^{x, y}, S_{k+1}^{x, y}); 2^k d_{X_1}(S_k^{x, y}, S_{k+1}^{x, y})\} \geq \\
		& \max\{\omega_0^{-1}  d_{Y_0}(T(S_k^{T(x), T(y)}), T(S_{k+1}^{T(x), T(y)}), 2^k  \omega_1^{-1}  d_{Y_1}(T(S_k^{T(x), T(y)}), T(S_{k+1}^{T(x), T(y)})\}=\\
		&\omega_0^{-1} \cdot J_M(2^k \omega_0/\omega_1; T(S_k^{T(x), T(y)}), T(S_{k+1}^{T(x), T(y)}):Y) \geq \\							
		& 2^{-\theta} \cdot \omega_0^{ \theta-1} \cdot \omega_1^{-\theta} \cdot J_M(2^k; T(S_k^{T(x), T(y)}), T(S_{k+1}^{T(x), T(y)}):Y),
	\end{align*} where in the last inequality we used the previous Lemma \ref{reindexacao}. From this
	inequality, we have that $$ p_{\theta, q; Y}(T(x), T(y)) \leq 2^{\theta} \cdot \omega_0^{1- \theta} \cdot \omega_1^{\theta} \cdot p_{\theta, q; X}(x, y).$$
	From this inequality, it is immediate that \begin{equation} \Delta_{\theta, q; Y}(T(x), T(y)) \leq 2^{\theta} \cdot \omega_0^{1- \theta} \cdot \omega_1^{\theta} \cdot \Delta_{\theta, q; X}(x, y), \quad x, y \in R_{\theta, q}^{\overrightarrow{X}}
	\end{equation} And this concludes the proof.
\end{proof}
\begin{remark} \label{obs sobre a constante lips}
	If we use the result that $$J_M(t; \cdot, \cdot) \leq \max \{1, t/s\} \cdot J_M(s; \cdot, \cdot)$$ we can eliminate the dependence on $\theta$ from the upper bound of the Lipschitz constant of the operator $T$ in the interpolated space. In this situation, setting $t=2^k$ and $s=2^k \omega_0/\omega_1$, we have $$\max\{\omega_0, \omega_1\} \cdot J_M(2^k; S_k, S_{k+1}:X) \geq J_M(2^k; T(S_k), T(S_{k+1}): Y),$$

	resulting in \begin{equation} \label{maximo das constantes}
		\Delta_{\theta, q; Y}(T(x), T(y)) \leq \max\{\omega_0, \omega_1\} \cdot \Delta_{\theta, q; X}(x, y), \quad x, y \in R_{\theta, q}^{\overrightarrow{X}}.
	\end{equation} We choose to display the possible dependence on $\theta$ because it is a constant more
	characteristic of the language in Interpolation Theory and expresses some type of convexity. \end{remark}
\begin{corollary} In Theorem \ref{interpolacao de lip} consider $X=Y$ and $d_X=d_Y$ a metric space complete. If $\omega_i<1$ for $i=0, 1$, and $(X_0, d_{X_0})$ and $(X_1, d_{X_1})$ are both complete then there exists a unique fixed point of $T$ in $X_0 \cup X_1$, even if $T$ is not necessarily Lipschitzian nor continuous in $(X_0 \cup X_1, K_M(1))$. \end{corollary}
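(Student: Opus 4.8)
The plan is to obtain the statement from two independent applications of the Banach fixed-point theorem, one inside the complete space $(X_0,d_{X_0})$ and one inside $(X_1,d_{X_1})$, tied together by the compatibility inequalities $d_X(x,y)\le C_i\,d_i(x,y)$. In the notation of Theorem~\ref{interpolacao de lip}, taking $X=Y$, $d_X=d_Y$ and $X_i=Y_i$ (so that $T$ genuinely maps $X_0$ into $X_0$ and $X_1$ into $X_1$ --- which is what makes iteration of $T$, hence the notion of a fixed point in $X_0\cup X_1$, meaningful), the restrictions $T_0=T|_{X_0}$ and $T_1=T|_{X_1}$ are contractions with constants $\omega_0<1$ and $\omega_1<1$; in particular $T(X_0\cap X_1)\subseteq X_0\cap X_1$, since $x\in X_0\cap X_1$ gives $T(x)\in X_0$ and $T(x)\in X_1$.

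For existence I would fix a point $z_0\in X_0\cap X_1$ (the intersection being nonempty) and form its orbit $z_n:=T^{\,n}(z_0)$, which stays in $X_0\cap X_1$. From $d_{X_i}(z_{n+1},z_n)\le\omega_i^{\,n}\,d_{X_i}(z_1,z_0)$ and $\omega_i<1$ one sees in the usual way that $(z_n)$ is Cauchy in $(X_0,d_{X_0})$ and in $(X_1,d_{X_1})$; completeness then gives $a\in X_0$ with $d_{X_0}(z_n,a)\to0$ and $b\in X_1$ with $d_{X_1}(z_n,b)\to0$. The compatibility inequalities yield $d_X(z_n,a)\le C_0\,d_{X_0}(z_n,a)\to0$ and $d_X(z_n,b)\le C_1\,d_{X_1}(z_n,b)\to0$, hence $d_X(a,b)=0$ and $a=b=:x^{*}\in X_0\cap X_1\subseteq X_0\cup X_1$. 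Finally $d_{X_0}(z_{n+1},T(x^{*}))=d_{X_0}(T_0 z_n,T_0 x^{*})\le\omega_0\,d_{X_0}(z_n,x^{*})\to0$, and since also $d_{X_0}(z_{n+1},x^{*})\to0$, uniqueness of limits forces $T(x^{*})=x^{*}$. The whole computation takes place inside $(X_0,d_{X_0})$ and $(X_1,d_{X_1})$, so it never invokes continuity or any Lipschitz property of $T$ with respect to $K_M(1)$ --- this is exactly the content of the ``even if'' clause, and it is necessary, since neither $(X_0\cup X_1,K_M(1))$ nor $(\overrightarrow{X}_{\theta,q}^{J_M})^{\prime}$ is guaranteed to be complete under these hypotheses.

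For uniqueness, if $y^{*}\in X_0\cup X_1$ satisfies $T(y^{*})=y^{*}$, then $y^{*}$ lies in $X_0$ or in $X_1$; in the first case $y^{*}$ and $x^{*}$ are both fixed points of the contraction $T_0$ on the complete space $(X_0,d_{X_0})$, so $y^{*}=x^{*}$ by the uniqueness clause of Banach's theorem, and the second case is identical with $T_1$ on $(X_1,d_{X_1})$ and $x^{*}\in X_1$. I do not expect a real obstacle: this is essentially a ``two-sided'' Banach theorem. The one step needing care is the identification $a=b$ of the two a priori distinct limits of the common orbit --- this is where the ambient metric $d_X$ and the constants $C_0,C_1$ are genuinely used, and it is also what pins the fixed point down inside $X_0\cap X_1$ rather than merely in the union; one should likewise resist applying Banach directly on $(X_0\cup X_1,K_M(1))$ or on $(\overrightarrow{X}_{\theta,q}^{J_M})^{\prime}$, even though Theorem~\ref{interpolacao de lip} and its remark do bound $T$ on those metrics by $2^{\theta}\omega_0^{1-\theta}\omega_1^{\theta}$, resp.\ $\max\{\omega_0,\omega_1\}$.
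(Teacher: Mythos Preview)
Your argument is correct and complete. The route, however, differs from the paper's. The paper invokes Theorem~\ref{interpolacao de lip} (in the form of inequality~\eqref{maximo das constantes}) to conclude that $T_{\theta,q}$ is a strict contraction on the interpolated space $(\overrightarrow{X}_{\theta,q}^{J_M})'$ with constant $\max\{\omega_0,\omega_1\}<1$, applies the Banach fixed-point theorem \emph{there} to obtain a fixed point $x'\in X_0\cup X_1$, and then combines this with the fixed points $x_0\in X_0$, $x_1\in X_1$ coming from the contractions $T_0,T_1$ on the complete spaces $X_0,X_1$ to force $x'=x_0=x_1$. You instead bypass the interpolated space entirely: you run a single orbit $z_n=T^n(z_0)$ inside $X_0\cap X_1$, extract its limits in $(X_0,d_{X_0})$ and $(X_1,d_{X_1})$ separately, and glue them via the ambient metric $d_X$.

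What each approach buys: the paper's proof illustrates that the interpolation machinery actually does work --- the corollary is presented precisely as an application of Theorem~\ref{interpolacao de lip} --- whereas your argument is more self-contained and, as you observe, does not require knowing that $(\overrightarrow{X}_{\theta,q}^{J_M})'$ (equivalently $(X_0\cup X_1,K_M(1))$) is complete, a point the paper's proof uses without comment. Your identification $a=b$ via $d_X$ also makes explicit why the fixed point lands in $X_0\cap X_1$, which in the paper's argument is recovered only a posteriori from $x_0=x_1$.
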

\begin{proof} Since $\max\{\omega_0, \omega_1\}<1$, it follows from inequality \ref{maximo das constantes} that $T_{\theta, q}$ (the operator $T$ restricted to the interpolated space) has a unique fixed point $x'$, by the \textit{Banach Fixed Point Theorem}. Such a fixed point necessarily belongs to $X_0 \cup X_1$. Thus, $x' \in X_0$ or $x' \in X_1$. Since by hypothesis the operator $T$ is a strong contraction in $X_0$ and in $X_1$, unique fixed points $x_0 \in X_0$ and $x_1 \in X_1$ exist. Therefore, we must have $x'=x_0=x_1$ and no other distinct point can be fixed by $T$ in $X_0 \cup X_1$.						
\end{proof}
\begin{remark} Throughout the proof we used the same letter $T$ to indicate different operators,
	since these operators were only restrictions of the same $T$ to spaces that were clear in the context through other symbols. \end{remark}
\begin{remark} In Theorem \ref{interpolacao de lip} if we impose the condition of continuity on $T: X \to Y$ we can waive the
	completeness of $(Y, d_Y)$.
\end{remark}
\begin{definition} Let $T: X \to Y$ be an operator between metric spaces. We will say that \textbf{$T$ is a
		compact operator} if for every sequence $(x_n)$ of points in $X$ that is bounded, there exists a subsequence $(x_{n_k})$ of $(x_n)$ such that $(T(x_{n_k}))$ is convergent. \end{definition}

\begin{proposition}\label{metrica F sozinho} If $X_0=X_1 =X$ and $d_0=d_1=d_{X}$. Given $\theta\in(0,1)$ and $q\in[1,\infty]$, we have \[ R_{\theta, q}^{\overrightarrow{X}} = X \] and the metric $\Delta_{\theta, q}$ is equivalent to the metric $d_X$.
\end{proposition}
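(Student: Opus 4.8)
The plan is to observe that under the stated hypotheses the whole apparatus degenerates onto a single metric space, and then to read off the equivalence from the two inclusion estimates already proved for $p_{\theta,q}$ and $\delta_{\theta,q}$.

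First I would record the ambient objects. Since $X_0=X_1=X$ and $d_0=d_1=d_X$, we have $X_0\cap X_1=X_0\cup X_1=X$, and for every $t>0$ and every $x,y\in X$ all pairs lie in $X_0\cap X_1$, so $h_t(x,y)=\min\{1,t\}\,d_X(x,y)$ and $J_M(t;x,y)=\max\{1,t\}\,d_X(x,y)$. Because $d_X$ obeys the triangle inequality, the two-point admissible linking sequence $(x,y)$ is optimal in the infimum defining $K_M$, so $K_M(t;x,y)=\min\{1,t\}\,d_X(x,y)$; in particular $K_M(1;\cdot,\cdot)=J_M(1;\cdot,\cdot)=d_X$ on $X$. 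Hence the metric to be completed is defined on all of $X$, and the completion is carried out inside $(X,d_X)$ again.

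Next I would prove that $\delta_{\theta,q}$ is bi-Lipschitz equivalent to $d_X$ on $X$. For the upper bound, inequality \ref{inclusao em J(1)} gives $p_{\theta,q}(x,y)\le J_M(1;x,y)=d_X(x,y)$, and by symmetry of $J_M(1;\cdot,\cdot)$ also $p_{\theta,q}(y,x)\le d_X(x,y)$, whence $P_{\theta,q}(x,y)\le d_X(x,y)$; evaluating the infimum defining $\delta_{\theta,q}$ at the trivial linking sequence $x_0=x,\ x_1=y$ gives $\delta_{\theta,q}(x,y)\le P_{\theta,q}(x,y)\le d_X(x,y)$. For the lower bound, inequality \ref{inclusao do interpolado em K(1)} combined with the identity $K_M(1;\cdot,\cdot)=d_X$ yields $d_X(x,y)=K_M(1;x,y)\le M_{\theta,q}\,\delta_{\theta,q}(x,y)$; since the term of index $k=0$ in $\Gamma_{\theta,q}(\min\{1,2^k\}_{k\in\mathbb Z})$ equals $1$ we have $M_{\theta,q}\ge1$, so
\[
\frac{1}{M_{\theta,q}}\,d_X(x,y)\ \le\ \delta_{\theta,q}(x,y)\ \le\ d_X(x,y),\qquad x,y\in X.
\]

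Finally I would check that the relative completion introduces no new points. By definition $(\overrightarrow{X}_{\theta,q}^{J_M})^{\prime}$ is the relative completion of $(X,\delta_{\theta,q})$ inside $(X_0\cup X_1,K_M(1))=(X,d_X)$, so it is contained in $X$; conversely each $x\in X$ is the $d_X$-limit of the constant sequence $x_n\equiv x$, which is Cauchy in $(X,\delta_{\theta,q})$, so $x$ lies in the relative completion. Thus $(\overrightarrow{X}_{\theta,q}^{J_M})^{\prime}=X$, and because no new points appear, $\Delta_{\theta,q}$ agrees with $\delta_{\theta,q}$ on all of $X$; together with the displayed estimate this gives $\tfrac{1}{M_{\theta,q}}\,d_X\le\Delta_{\theta,q}\le d_X$, the claimed equivalence. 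I do not expect a genuine obstacle: the only points needing a little care are the optimality of the direct link in $K_M(1;x,y)=d_X(x,y)$ (which uses the triangle inequality to exclude shorter paths) and the fact that, since the completion is taken relative to $X_0\cup X_1=X$ and not to a larger ambient space, it cannot produce elements outside $X$.
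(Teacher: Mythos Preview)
Your proof is correct and follows essentially the same route as the paper: identify $K_M(1)=J_M(1)=d_X$ in the degenerate situation, use inequalities~\eqref{inclusao em J(1)} and~\eqref{inclusao do interpolado em K(1)} to sandwich $\delta_{\theta,q}$ between $\tfrac{1}{M_{\theta,q}}d_X$ and $d_X$, and conclude that the relative completion inside $(X,d_X)$ returns $X$ itself. The only cosmetic difference is that the paper first sandwiches $p_{\theta,q}$ via~\eqref{inclusao em K(1)} and then propagates the bounds through $P_{\theta,q}$ to $\delta_{\theta,q}$, whereas you go straight to the lower bound for $\delta_{\theta,q}$ via~\eqref{inclusao do interpolado em K(1)}; this is a harmless shortcut.
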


\begin{proof} For $t>0$ and $u,v\in X$, \[ J_M(t;u,v)=\max\{d_X(u,v),\,t \cdot d_X(u,v)\}= \begin{cases}
		d_X(u,v), & 0<t\le 1,\\ t \cdot d_X(u,v), & t\ge 1.
	\end{cases} \] In particular, for $t=1$ we have $J_M(1;u,v)=d_X(u,v)$, and the metric $K_M(1;\cdot,\cdot)$	
	defined by infimum over linking sequences coincides with the original metric $d_X$ (since the cost per step is always $h_1(\cdot,\cdot)=d_X(\cdot,\cdot)$).
	By inequalities \eqref{inclusao em K(1)} and \eqref{inclusao em J(1)}): \begin{equation}\label{prelimKp}
		K_M(1;u,v) \le M_{\theta,q} \cdot  p_{\theta,q}(u,v), \end{equation} and for constant sequences, \begin{equation}\label{prelimJp}
		p_{\theta,q}(u,v) \le J_M(1;u,v). \end{equation} In the trivial case, $K_M(1;u,v)=d_X(u,v)$ and $J_M(1;u,v)=d_X(u,v)$. Substituting these	equalities into \eqref{prelimKp} and \eqref{prelimJp} we obtain, for all $u,v\in X$, \[ d_X(u,v) \le M_{\theta,q} \cdot p_{\theta,q}(u,v) \qquad\text{and}\qquad p_{\theta,q}(u,v) \le d_X(u,v). \] Rewriting, \begin{equation}\label{sandwich-p}
		\frac{1}{M_{\theta,q}} \cdot d_X(u,v) \;\le\; p_{\theta,q}(u,v) \;\le\; d_X(u,v). \end{equation} The inequalities \eqref{sandwich-p} show that $p_{\theta,q}$ is equivalent to the metric
	$d_X$ (with explicit constants $1/M_{\theta,q}$ and $1$). We have
	\[ P_{\theta,q}(u,v)=\tfrac12\bigl(p_{\theta,q}(u,v)+p_{\theta,q}(v,u)\bigr), \] and \[ \delta_{\theta,q}(x,y)=\inf\sum_{j=0}^{n-1} P_{\theta,q}(x_j,x_{j+1}), \] where the infimum runs over all linking sequences $x_0=x,\dots,x_n=y$ in $X_0\cap X_1=X$. Since $p_{\theta,q}$ satisfies \eqref{sandwich-p}, the same inequalities hold for $P_{\theta,q}$. More precisely, using \eqref{sandwich-p} and the definition of $P_{\theta,q}$, we obtain
	\[ \frac{1}{M_{\theta,q}}\cdot d_X(u,v) \le P_{\theta,q}(u,v) \le d_X(u,v), \qquad \forall u,v\in
	X. \] Now, by the definition of $\delta_{\theta,q}$, we have \[ \frac{1}{M_{\theta,q}}\cdot  d_X(x,y) \le \delta_{\theta,q}(x,y) \le d_X(x,y), \qquad \forall x,y\
	in X. \] Therefore $\delta_{\theta,q}$ is equivalent to $d_X$. Since $R_{\theta, q}^{\overrightarrow{X}}$ was defined as the relative completion of $(X_0\cap X_1,\delta_{\theta,q}) $ in $(X_0\cup X_1,K_M(1))$ and here $X_0\cap X_1=X_0\cup X_1 = X$, it holds that
	\[ R_{\theta, q}^{\overrightarrow{X}} = X. \]
	Since $\Delta_{\theta, q}=\delta_{\theta, q}$ on $X_0 \cap X_1=X$, we have \[ \frac{1}{M_{\theta,q}}\cdot d_X(x,y) \le \Delta_{\theta,q}(x,y) \le d_X(x,y), \qquad \forall x,y\
	\in X \] as we wanted to demonstrate.
\end{proof}
\begin{corollary}\label{compacidade de operadores} Let $(X_0, X_1)_X$ be a pair of metric spaces, $\theta \in (0, 1)$, $q \in [1, \infty]$ and $(F, d_F)$ a complete metric space. If $T$ is an operator that satisfies \begin{itemize}
		\item[(i)] $T: X \rightarrow F$ is closed; \item[(ii)] $T: X_0 \rightarrow F$ is a compact Lipschitzian operator; \item[(iii)] $T: X_1 \rightarrow F$ is Lipschitzian; \item[(iv)] Every sequence bounded in $X_0 \cap X_1$ with respect to the interpolated metric is also bounded with respect to the metric $d_{X_0}$, \end{itemize} then $T_{\theta, q}:=T|_{R_{\theta, q}^{\overrightarrow{X}}}:  R_{\theta, q}^{\overrightarrow{X}} \to F$ is Lipschitzian and compact. \end{corollary}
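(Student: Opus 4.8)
The plan is to separate the statement into its two assertions — Lipschitz continuity and compactness — and dispatch the first one immediately from the machinery already available, reserving the genuine argument for compactness.

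\textbf{Step 1 (Lipschitz continuity).} I would invoke Theorem \ref{interpolacao de lip} with the target couple taken to be the trivial pair $(F,F)_F$, that is $Y_0=Y_1=Y=F$ with the metric $d_F$; this pair is trivially compatible and $(F,d_F)$ is complete by hypothesis. Hypotheses (i), (ii), (iii) supply exactly the closedness of $T:X\to F$ and the Lipschitz bounds of $T_0:=T|_{X_0}$ and $T_1:=T|_{X_1}$ with constants $\omega_0,\omega_1$. Hence Theorem \ref{interpolacao de lip} gives that $T_{\theta,q}$ sends $(\overrightarrow{X}_{\theta,q}^{J_M})'$ into $(\overrightarrow{F}_{\theta,q}^{J_M})'$ and is Lipschitzian with constant at most $2^{\theta}\omega_0^{1-\theta}\omega_1^{\theta}$ for the metric $\Delta_{\theta,q;F}$ (or even $\max\{\omega_0,\omega_1\}$, by \ref{maximo das constantes}). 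Applying Proposition \ref{metrica F sozinho} to the trivial pair $(F,F)_F$ identifies $(\overrightarrow{F}_{\theta,q}^{J_M})'=F$ and yields the sandwich $\tfrac{1}{M_{\theta,q}}d_F\le\Delta_{\theta,q;F}\le d_F$, so $d_F(T(x),T(y))\le M_{\theta,q}\,\Delta_{\theta,q;F}(T(x),T(y))$. Chaining the two estimates gives $d_F(T(x),T(y))\le C\,\Delta_{\theta,q;X}(x,y)$ for all $x,y\in(\overrightarrow{X}_{\theta,q}^{J_M})'$, with $C:=M_{\theta,q}\,2^{\theta}\omega_0^{1-\theta}\omega_1^{\theta}$; this is the Lipschitz claim, viewing $T_{\theta,q}$ as an operator into $F$.

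\textbf{Step 2 (Compactness).} Let $(z_n)$ be a sequence in $(\overrightarrow{X}_{\theta,q}^{J_M})'$ that is bounded for $\Delta_{\theta,q}$. Since $X_0\cap X_1$ is dense in the relative completion $(\overrightarrow{X}_{\theta,q}^{J_M})'$, I would choose, for each $n$, a point $w_n\in X_0\cap X_1$ with $\Delta_{\theta,q}(z_n,w_n)<1/n$. By the triangle inequality $(w_n)$ is again $\Delta_{\theta,q}$-bounded, and since $\Delta_{\theta,q}$ restricts to $\delta_{\theta,q}$ on $X_0\cap X_1$, the sequence $(w_n)$ is bounded in $X_0\cap X_1$ with respect to the interpolated metric. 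Hypothesis (iv) then forces $(w_n)$ to be $d_{X_0}$-bounded, and compactness of $T:X_0\to F$ from (ii) produces a subsequence $(w_{n_k})$ with $T(w_{n_k})\to a$ in $F$ for some $a\in F$. Finally, the Lipschitz bound of Step 1 applied to the pair $z_{n_k},w_{n_k}\in(\overrightarrow{X}_{\theta,q}^{J_M})'$ gives $d_F(T(z_{n_k}),T(w_{n_k}))\le C/n_k\to 0$, hence $d_F(T(z_{n_k}),a)\le C/n_k+d_F(T(w_{n_k}),a)\to 0$; so $T(z_{n_k})\to a$ and $T_{\theta,q}$ is compact.

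The routine ingredients are Step 1 and the density selection of the $w_n$; the only place requiring real care — and the sole use of the somewhat nonstandard hypothesis (iv) — is the passage from $\Delta_{\theta,q}$-boundedness of $(z_n)$ to $d_{X_0}$-boundedness of the approximants $(w_n)$, which is precisely what unlocks the compactness of $T|_{X_0}$. Secondary points to verify are that the Lipschitz constant $C$ is genuinely finite (a fixed multiple of $M_{\theta,q}\,2^{\theta}\omega_0^{1-\theta}\omega_1^{\theta}$), that $T_{\theta,q}$ restricted to $X_0\cap X_1$ is literally $T|_{X_0\cap X_1}$ so the compactness of $T|_{X_0}$ may be invoked on $(w_n)$, and that "bounded with respect to the interpolated metric" in (iv) is interpreted as boundedness for $\delta_{\theta,q}$ on $X_0\cap X_1$, which agrees with the restriction of $\Delta_{\theta,q}$.
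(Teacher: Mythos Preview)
Your proof is correct and follows exactly the route the paper intends: the paper's own proof is the single line ``immediate from the definitions, Proposition~\ref{metrica F sozinho} and Theorem~\ref{interpolacao de lip},'' and your two steps are precisely a careful unpacking of that sentence --- Theorem~\ref{interpolacao de lip} applied to the trivial target pair $(F,F)_F$ together with Proposition~\ref{metrica F sozinho} for the Lipschitz part, and then the density of $X_0\cap X_1$ plus hypothesis~(iv) to feed the compactness of $T|_{X_0}$.
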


\begin{proof} The result is immediate from the definitions, proposition \ref{metrica F sozinho} and theorem \ref{interpolacao de lip}.
\end{proof}

\begin{corollary} \label{ponto fixo} Let $(X_0, X_1)_X$ be a pair of compatible metric spaces, $\theta \in (0, 1)$, $q \in [1, \infty)$ and  $T: X \to X$ a closed operator that satisfies \begin{itemize}
		 \item[(a)] $T_0: X_0 \rightarrow X_0$ is a lipschitzian operator with constant $ \omega_0$;
		 \item[(b)] $T_1: X_1 \rightarrow X_1$ is a lipschitzian operator with constant $\omega_1$; 
		 \item[(c)] $X_1$ is closed in $(X_0 \cup X_1, K_M(1))$ and $2^{\theta} \cdot \omega_0^{1- \theta} \cdot \omega_1^{\theta} <1$.
		\end{itemize}
Then $T_1$ has a fixed point.
\end{corollary}

\begin{proof} Due to Theorem \ref{interpolacao de lip}, the operator $T$ restricted to the interpolated space $R_{\theta, q}^{\overrightarrow{X}}$ is lipschitzian with constant not exceeding $2^{ \theta} \cdot \omega_0^{1- \theta} \cdot \omega_1^{\theta}$.  Thus, $T_{\theta, q}$ is a strong contraction. Since $(X, d_X)$ is complete, it follows that the relative completion is also complete. Thus, by \textbf{Banach's Fixed-Point Theorem}, the operator $T_{\theta, q}$ has a unique fixed point $w$ in $R_{\theta, q}^{ \overrightarrow{X}}$. By the density of $X_0 \cap X_1$ in the interpolated space and the continuous inclusion of the interpolated space in the metric space $(X_0 \cup X_1, K_M(1))$, there exists a sequence $(w_n)$ of points in $X_0 \cap X_1$ such that $K_M(1; w_n, w) \to 0$. Since $X_1$ is closed in $(X_0 \cup X_1, K_M(1))$, it follows that $w \in X_1$. Thus, due to the closedness of $T$ in all $X$, it follows that $T_1(w)=w$.	
	
\end{proof}

\section{The $R$-method in Banach Spaces}

Let $(X_0,\|\cdot\|_0)$ and $(X_1,\|\cdot\|_1)$ be Banach spaces continuously embedded 
into a common Hausdorff topological vector space $Z$ over a field $\mathbb{F}$ ($\mathbb{R}$ or $\mathbb{C}$), with $X_0\cap X_1\neq\emptyset$.
 Let us consider the classic $K$-functional due to \textit{Lions-Peetre}[see \cite{LionsPeetre1964}] given by \begin{equation}
	K(t; x):= \inf_{x_0+x_1=x} (\|x_0 \|_0 + t \cdot \|x_1 \|_1), \quad t>0, \quad x \in X_0 + X_1.
\end{equation}
It is known that for all $t>0$ the functional $K(t; \cdot)$ is a norm on $X=X_0 + X_1$ and that $(X_0 +X_1, K(t; \cdot))$ is a Banach space.

Note that $K(1; x) \leq \| x \|_0$ for all $x \in X_0$ and $K(1; y) \leq \| y \|_1$ for all $y \in X_1$. It shows that $(X_0, X_1)_{X}$ is a compatible pair of metric spaces. In this environment, we have

\begin{proposition}
	$R_{\theta, q}^{\overrightarrow{X}}$ is a Banach space.
\end{proposition}

\begin{proof} Note that $0 \in X_0 \cap X_1 \subseteq R_{\theta, q}^{\overrightarrow{X}}$. Now, we define $\| x \|_{\theta, q}:= \Delta_{\theta, q}(x, 0)$ for all $x \in R_{\theta, q}^{\overrightarrow{X}}$. The set $X_0 \cap X_1$ is endowed with a linear structure. By density of $X_0 \cap X_1$ we can define a linear structure in $R_{\theta, q}^{\overrightarrow{X}}$ by putting for all $x, y \in R_{\theta, q}^{\overrightarrow{X}}$ and $\lambda \in \mathbb{F}$ \begin{equation} \label{linearidade} x+\lambda \cdot y:= \lim_{n \to \infty} x_n+\lambda \cdot y_n,\end{equation} where $\|x_n-x \|_{\theta, q} \to 0$ and $\| y_n - y \|_{\theta, q} \to 0$ and $(x_n)$ and $(y_n)$ are sequences of points in $X_0 \cap X_1$ given by the density. By the equality \ref{linearidade}, it follows that $\| \cdot \|_{\theta, q}$ is a norm on $R_{\theta, q}^{\overrightarrow{X}}$ because $\delta_{\theta, q}= \Delta_{\theta, q}$ in $X_0 \cap X_1$.   Furthermore, since $(X_0, \| \cdot \|_0)$ and $(X_1, \| \cdot \|_1)$ are Banach spaces, it is also known that $X=X_0 +X_1$ is a Banach space with the norm $K(1; \cdot)$. Thus, $R_{\theta, q}^{\overrightarrow{X}}$ is a complete metric space, because it is the relative completion of $(X_0 \cap X_1, \| \cdot \|_{\theta, q})$ in $(X_0+X_1, K(1; \cdot))$ and therefore a Banach Space.

\end{proof}

\begin{remark}
	This feature highlights a genuine advantage of the $R$-method. 
	Classical interpolation functors on normed spaces are formulated within a linear
	framework and act exclusively on linear continuous operators. 
	By contrast, the $R$-method is intrinsically metric and does not rely on linear
	structure. When restricted to normed spaces, it still induces an interpolation functor
	and preserves the Lipschitz property of operators with closed graphs, allowing for the
	interpolation of nonlinear mappings. 
	
	In particular, the method enlarges the classical category of admissible operators while
	retaining the functorial behavior expected in interpolation theory, thereby providing a
	natural extension of classical interpolation beyond the linear setting.
\end{remark}


\begin{thebibliography}{00}



\bibitem{AmbrosioGigliSavare2008} AMBROSIO, L.; GIGLI, N.; SAVARÉ, G. \textit{Gradient Flows in Metric Spaces and in the Space of Probability Measures}. 2. ed. Berlin: Birkhäuser, 2008.
\bibitem{Aronszajn1950} ARONSZAJN, N.; PANITCHPAKDI, P. Extension of uniformly continuous transformations and hyperconvex metric spaces. \textit{Pacific Journal of Mathematics}, v. 6, p. 405–439, 1956.


\bibitem{BerghLofstrom1976} BERGH, J.; LÖFSTRÖM, J. \textit{Interpolation Spaces. An Introduction}. Springer-Verlag, 1976.
\bibitem{BrudnyiKruglyak1991} BRUDNYI, Y.; KRUGLYAK, N. \textit{Interpolation Functors and Interpolation Spaces}. North-Holland, 1991.
\bibitem{BuragoBuragoIvanov2001} BURAGO, D.; BURAGO, Y.; IVANOV, S. \textit{A Course in Metric Geometry}.
AMS, 2001.
\bibitem{CoifmanWeiss1971} COIFMAN, R.; WEISS, G. \textit{Analyse Harmonique Non-Commutative sur Certains Espaces Homogènes}. Springer Lecture Notes, 242, 1971.


\bibitem{Folland1999} FOLLAND, G. B. \textit{Real Analysis: Modern Techniques and Their Applications}. 2. ed. New York: Wiley, 1999.
\bibitem{Heinonen2001} HEINONEN, J. \textit{Lectures on Analysis on Metric Spaces}. Springer-Verlag, 2001.
\bibitem{Kreyszig1978} KREYSZIG, E. \textit{Introductory Functional Analysis with Applications}. Wiley, 1978.
\bibitem{LionsPeetre1964} LIONS, J.L.; PEETRE, J. Sur une classe d'espaces d'interpolation. \textit{Publications Mathématiques de l'I.H.É.S.}, 19:5–68, 1964.


\bibitem{Pansu1989} PANSU, P. Métriques de Carnot–Carathéodory et quasiisométries des espaces symétriques de rang un. \textit{Annals of Mathematics}, v. 129, p. 1–60, 1989.
\bibitem{Pietsch1978} PIETSCH, A. \textit{Operator Ideals}. North-Holland, 1978.
\bibitem{Robledo2024} SETTE, R. M. M.; FERNANDEZ, D. L.; DA SILVA, E. B. A Theory for Interpolation of Metric Spaces. \textit{Axioms}, v. 13, 439, 2024. DOI: 10.3390/axioms13070439.
\bibitem{Santambrogio2015} SANTAMBROGIO, F. \textit{Optimal Transport for Applied Mathematicians}. Birkhäuser, 2015.

\bibitem{Villani2003} VILLANI, C. \textit{Topics in Optimal Transportation}. AMS Graduate Studies in Mathematics, vol. 58, 2003.
\bibitem{Villani2009} VILLANI, C. \textit{Optimal Transport: Old and New}. Springer-Verlag, 2009.
\bibitem{Weaver2018} WEAVER, N. \textit{Lipschitz Algebras}. 2. ed. World Scientific, 2018.

\end{thebibliography}
\end{document}